\newcommand{\argmin}{\mathop{\rm argmin}\limits}
\newcommand{\prox}{\mathrm{prox}}
\newtheorem{proposition}{Proposition}[section]
\newtheorem{theorem}{Theorem}[section]
\newtheorem{lemma}{Lemma}[section]
\newtheorem{assumption}{Assumption}[section]
\title{Exact Penalty Method for Knot Selection of B-Spline Regression}
\author{Shotaro Yagishita
\quad
Jun-ya Gotoh}
\date{\today}
\begin{document}
\maketitle
\begin{abstract}
This paper presents a new approach to selecting knots at the same time as estimating the B-spline regression model. 
Such simultaneous selection of knots and model is not trivial, but our strategy can make it possible by employing a nonconvex regularization on the least square method that is usually applied.  
More specifically, motivated by the constraint that directly designates (the upper bound of) the number of knots to be used, we present an (unconstrained) regularized least square reformulation, which is later shown to be equivalent to the motivating cardinality-constrained formulation. 
The obtained formulation is further modified so that we can employ a proximal gradient-type algorithm, known as GIST, for a class of non-smooth non-convex optimization problems. 
We show that under a mild technical assumption, the algorithm is shown to reach a local minimum of the problem. 
Since it is shown that any local minimum of the problem satisfies the cardinality constraint, the proposed algorithm can be used to obtain a spline regression model that depends only on a designated number of knots at most. 
Numerical experiments demonstrate how our approach performs on synthetic and real data sets.
\end{abstract}

\section{Introduction}
B-spline regression \cite{de1978practical} is a popular methodology for nonparametric estimation of a nonlinear model represented by a  spline function, which is defined by a weighted sum of piecewise polynomials called basis functions.
On the other hand, its high flexibility in describing the model's nonlinearity is (implicitly or explicitly) attributed to the high degree of freedom of the spline (e.g., the degree of the polynomials, and shapes and locations of the basis functions), so in practice, the user has to take a lot into account so as to capture the nonlinearity of the phenomenon of interest while avoiding overfitting to data. 

As in most practices, we suppose in this paper that the order of the polynomials is fixed at a single value (for example, the third order). 
To tame the degree of freedom of the spline (for the fixed order of polynomials), it is common to add regularizing terms to the sum of squared residuals (SSR) of data points from the model. 
For example, it is popular to suppress the integral of the second derivatives of the spline function (over an interval) \citep{o1986statistical}, or 
the second difference of the coefficients of three consecutive basis functions in addition to the SSR \citep{eilers1996flexible}.
Either regularizer mitigates the overfitting by restricting the curvature of the spline. 
One of the advantages of such regularizer is that it is represented by a convex quadratic term, which helps us avoid the ill condition of the objective function of the least squares estimation especially when the number of knots (or basis functions) is large. 

A more interesting and challenging way of suppressing the degree of freedom is selecting knots, which directly determine the locations and shapes of basis functions.  
Although fitting through the P-spline, for example, can be cast as an unconstrained convex quadratic optimization, resulting in solving a linear equation, it is not easy to introduce a tractable optimization criterion that simultaneously determines the locations of knots. (Note that in estimating the ordinary P-spline, the locations of knots are to be determined before solving the convex quadratic optimization.) 
In their preprint, \citet{goepp2018spline} recently presented an $\ell_0$-(pseudo) norm penalty on the vector of (specific order) difference of the coefficient vector of basis functions as a regularizer so as to select the locations of knots, and proposed a heuristics based on an approximation of the $\ell_0$-regularizer, which is called A-spline. 
Since it can be shown that the $\ell_0$-norm penalty represents the number of knots used, their method actually aims to find a subset of knots that simultaneously constrains the number of knots used and the model's SSR. 

Inspired by their approach, we propose a new methodology based on continuous optimization, starting with an intuitive $\ell_0$-constrained least square formulation where (the upper bound of) the number of knots to be used to define the spline is supposed to be designated by the user. 
Despite its similarity, our proposal is more intuitive and attractive than the A-spline in that i)
while the A-spline employs the cardinality as a penalty term in the objective and includes a less interpretable penalty parameter, the number of used knots is directly designated in our formulation, and ii) we further reformulate the $\ell_0$-constraint into an equivalent continuous regularizer, which is an example of the generalized trimmed lasso \citep{yagishita2022exact}. 
In particular, the equivalence of the continuous regularizer is established by an exact penalty result.
Namely, we show that if the penalty parameter of the regularizer is fixed at a large value, the unconstrained continuous optimization is equivalent to the cardinality constrained optimization. 
As a result, the hyper-parameter of our methodology virtually becomes the number of knots only, and the user only has to deal with the more interpretable parameter rather than the less interpretable penalty parameter employed in the A-spline.

To obtain a solution to our penalized formulation, we propose to apply the GIST \citep{wright2009sparse,gong2013general}, a proximal gradient type algorithm, to the further reduced formulation obtained by a change of variables. 
We show that under a mild assumption the sequence generated by the method converges to a local optimum, which satisfies the cardinality constraint. 
Therefore, with our formulation and the algorithm, the user can perform the simultaneous selection of locations of knots and least square fitting by specifying (the upper bound of) the number of used knots before running the algorithm.
Aside from dealing with the hyper-parameters, it may appear that knots' locations can be determined almost continuously by preparing many knots candidates for either our method or the A-spline.
However, our experiment reveals that unlike our method, the A-spline fails to estimate the model when a large number of knots candidates are available. 

Contributions of the paper are summarized as follows:
\begin{itemize}
    \item A new continuous optimization formulation and its solution method for the B-spline regression are proposed.
    Our methodology guarantees to obtain a spline regression model using no greater than the designated number of knots owing to the exact penalty property (Theorem \ref{thm:e.p.p.}) and the global convergence to a local optimal solution (Theorem \ref{thm:global-convergence}).
    We point out a mistake of \citet{goepp2018spline} on a fact that is also important in our formulation and prove the correct one (Theorem \ref{thm:active_knot_indicator}).
    \item Numerical experiments show that our proposal performs more stable estimation than the A-spline \cite{goepp2018spline}.
    In particular, our methodology can select knots from many candidates, namely, the locations of the knots can be selected almost continuously, but the A-spline cannot.
\end{itemize}

The rest of this paper is organized as follows.
In the next section, a brief review of B-spline regression and our proposed formulation are described.
Section \ref{sec:solution-method} presents a method to obtain a local optimal solution of our formulation satisfying a constraint on the number of knots.
Numerical examples to demonstrate how our proposed methodology works are reported in Section \ref{sec:experiments}.
Finally, Section \ref{sec:conclusion} concludes the paper.

\section{Formulation}\label{sec:formulation}
In this section, we first review the ordinary formulation of B-spline regression (see \citet{de1978practical} for details).
After giving a motivating example, we will present a modified formulation, which includes a variant of cardinality constraint, and our reformulation with an alternative penalty function.

\subsection{B-spline regression}
Let $(x_i,y_i)\in\mathbb{R}^2,~i=1,\dots,n$, be $n$ paired observations of explanatory (or independent) variable $x$ and a response (or dependent) variable $y$. 
Given such a data set, our aim is to estimate a non-linear model 
\begin{align}
y_i = s(x_i)+\epsilon_i, \quad i=1,\dots,n,
\end{align}
where $s:\mathbb{R}\to\mathbb{R}$ is a function and $\epsilon_i$ is the residual. 

For $s$, we consider a piece-wise polynomial function.
Let $p\in\mathbb{N}\cup\{0\}$ be fixed, and let $t_{-p},t_{-p+1},\dots,t_{l+p}$ be the entire $l+2p+1$ \emph{knots} involved in the regression.
We assume that $t_{-p}<\cdots<t_0<t_1<\cdots<t_{l-1}<t_l<\cdots<t_{l+p}$ holds, and that the observed values $x_1,\dots,x_n$ of the explanatory variable exist within the interval $[t_0,t_l)$. (As will be elaborated on later, the other $2p$ knots, $t_{-p},\dots,t_{-1}$ and $t_{l+1},\dots,t_{l+p}$, lie out of the interval; they are necessary for defining the basis functions.)  
Given those knots, the basis functions $B^{(p)}_{j}$ are computed in a recursive way, as follows. 
First, the $j$-th basis of order $0$ is defined by
\begin{align}
    B^{(0)}_{j}(x)=\bm{1}_{[t_j,t_{j+1})}(x)\coloneqq
    \begin{cases}
        1, & t_j\le x< t_{j+1},\\
        0, & \mbox{otherwise},\\
    \end{cases}
\end{align}
$j=-p,\dots,l+p-1$, and the basis functions of order $p\ge1$ are then defined, recursively, by the formula given by
\begin{align}
    B^{(q)}_{j}(x)=\frac{x-t_{j}}{t_{j+q}-t_{j}}B^{(q-1)}_{j}(x)+\frac{t_{j+q+1}-x}{t_{j+q+1}-t_{j+1}}B^{(q-1)}_{j+1}(x),\quad j=-p,\dots,l+p-q-1,
\end{align}
$q=1,\ldots,p$.
Figure \ref{fig:my_label} illustrates the structure of restrictions of basis functions to $[t_0,t_l)$.

\begin{figure}[ht]
    \centering
    \includegraphics[width=\columnwidth]{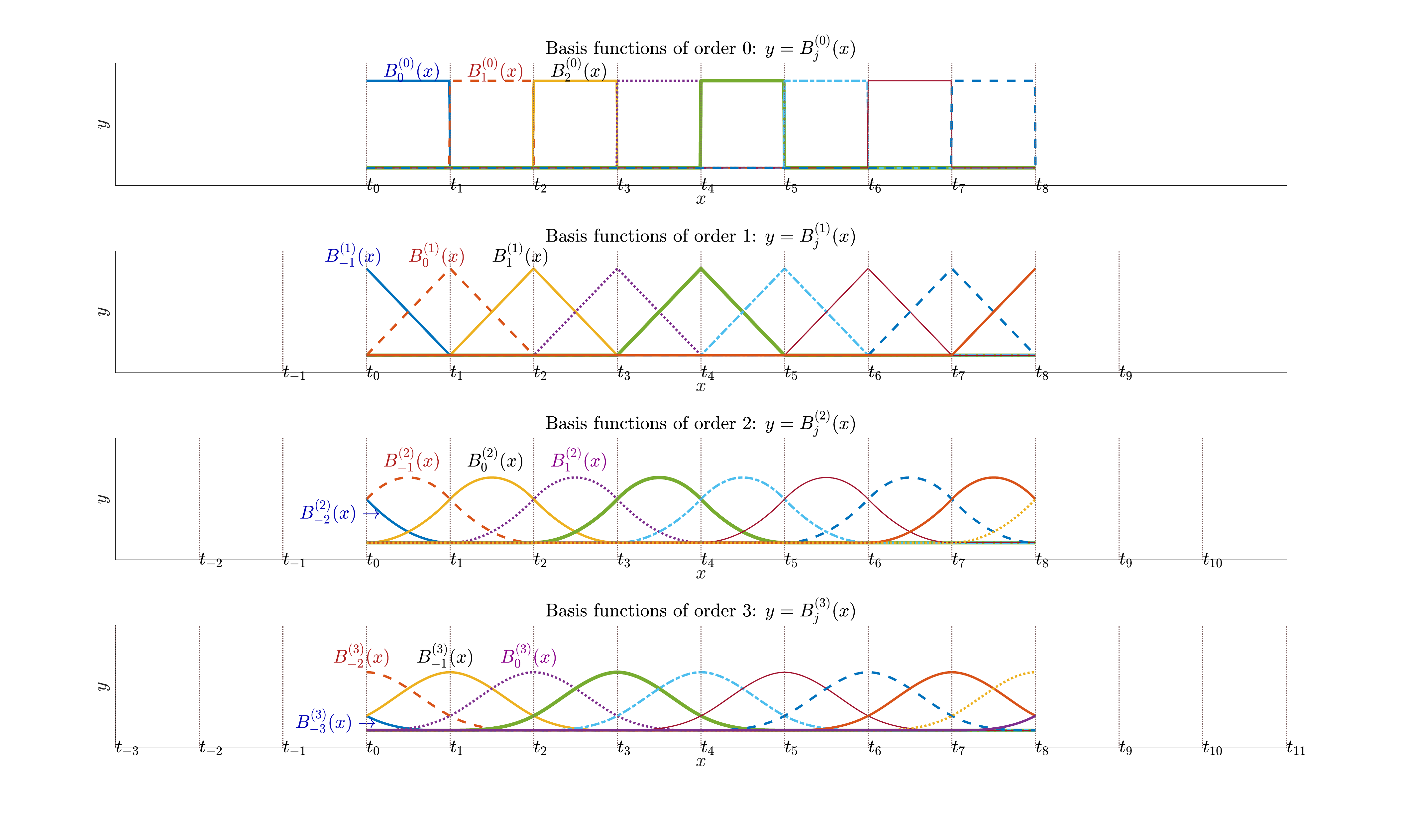}
    \caption{Basis functions defined with equally-spaced knots for $p=0,1,2,3$}
    \label{fig:my_label}
    \begin{quote}\footnotesize
        The above pictures demonstrate the case where $l=9$ knots are given, $t_0,t_1,\dots,t_8$. 
        The top row shows eight basis functions of order $0$, each being an indicator function of corresponding interval. 
        Starting with these, the higher order basis functions are defined recursively up to the designated degree, e.g., $p=3$ in this figure. 
        For example, in the second row, the basis function $B^{(1)}_0$, depicted as the dashed orange line, is computed from $B^{(0)}_0$ and $B^{(0)}_1$, which are in the top row; the basis function $B^{(2)}_0$, depicted as the solid yellow curve in the third row, is computed from $B^{(1)}_0$ and $B^{(1)}_1$, which are in the second row, and so on.  
    \end{quote}
\end{figure}

The B-spline function of order $p$ is then defined by
\begin{align}
    s(x)=\sum_{j=-p}^{l-1}\alpha_{j+p+1}B^{(p)}_j(x),
\end{align}
where $B^{(p)}_{j}$ denotes the $j$-th basis function of order $p$ and $\alpha_{j}$ is the coefficient of $B^{(p)}_{j}$, where $j=-p,-p+1,\dots,l-2,l-1$. 

By construction, the B-spline $s(x)$ is a piecewise polynomial of order $p$, and with the higher $p$, the B-spline $s$ can represent higher nonlinearity in $x$.
Although the order $p$ can be a hyper-parameter which enables the user to improve the fitting via calibration, we assume that $p$ is fixed throughout the paper so as to avoid unnecessary disturbance. 
In fact, the cubic spline (i.e., $p=3$) is known to be sufficient enough to minimize the integral of the curvature of $s$, and we may fix at $p=3$ (although our methodology is independent of the number).

Let $\bm{y}=(y_1,\dots,y_n)^\top\in\mathbb{R}^n$ and $\bm{\alpha}=(\alpha_1,\dots,\alpha_{l+p})^\top\in\mathbb{R}^n$, and let $\bm{B}^{(p)}$ be the $n\times(l-p)$ matrix of the values of the basis functions $B_j^{(p)}(x)$ at $x=x_{1},\dots,x_{n}$, namely,
\begin{align}
    \bm{B}^{(p)}\coloneqq
    \begin{pmatrix}
    B^{(p)}_{-p}(x_1) & \cdots & B^{(p)}_{l-1}(x_1)\\
    \vdots      &        & \vdots \\
    B^{(p)}_{-p}(x_n) & \cdots & B^{(p)}_{l-1}(x_n)\\
    \end{pmatrix}.
\end{align}
With this notation, (half) the sum of squared residuals (SSR) for the B-spline function becomes a convex quadratic function denoted by
\begin{align}
\frac{1}{2}\sum_{i=1}^{n}\epsilon_i^2=\frac{1}{2}\|\bm{y}-\bm{B}^{(p)}\bm{\alpha}\|_2^2.
\label{obj:ssr}
\end{align}
Minimizing \eqref{obj:ssr} with respect to $\bm{\alpha}\coloneqq(\alpha_1,\dots,\alpha_{l+p})^\top$ results in over-fitting especially when knots are set at locations of $x_i$'s, i.e., $t_i=x_{i+1},i=0,\dots,l-1$ with $l=n$. 
To avoid over-fitting, it is common to minimize the objective penalized with %the smoothing
a certain regularization term:
\begin{align}
f(\bm{\alpha})&\coloneqq\frac{1}{2}\|\bm{y}-\bm{B}^{(p)}\bm{\alpha}\|_2^2+cR(\bm{\alpha}),
\label{eq:smoothing_spline_estimation}
\end{align}
where $R$ is a regularization function and $c\ge0$ is a constant for striking a balance between the SSR and the regularization term $R(\bm{\alpha})$. 
The most popular regularizer is the one defined by
\begin{align}
R(\bm{\alpha})=\frac{1}{2}\sum_{j=2}^{l+p-1}\big(\alpha_{j-1}-2\alpha_{j}+\alpha_{j+1}\big)^2.
\label{smoothing_regularizer}
\end{align}
The regularizer \eqref{smoothing_regularizer} can be considered as the quadratic penalty on the second difference of the coefficients of the adjacent basis functions since  $\alpha_{j-1}-2\alpha_{j}+\alpha_{j+1}=(\alpha_{j-1}-\alpha_{j})-(\alpha_{j}-\alpha_{j+1})$. 
The B-spline estimation using a certain order difference (possibly, not the second order one) as the regularizer is called P-spline \cite{eilers1996flexible}.
It is noteworthy that the P-spline regularizer with an arbitrary order difference is represented by
\begin{align}
R(\bm{\alpha})=\frac{1}{2}\|\bm{D}\bm{\alpha}\|_2^2,
\end{align}
where $\bm{D}$ is a difference matrix (see \cite{eilers1996flexible}).

\subsection{A motivating example and our strategy}
As reviewed in the previous section, the process for estimating a B-spline model depends on the basis functions or knots. 
Figure \ref{fig:two_estimation_results} shows two estimation results of P-spline with $p=3$ in Panels (a) and (b), each using a different set of knots of the same size.

\begin{figure}[ht]
    \centering
    \includegraphics[width=\columnwidth]{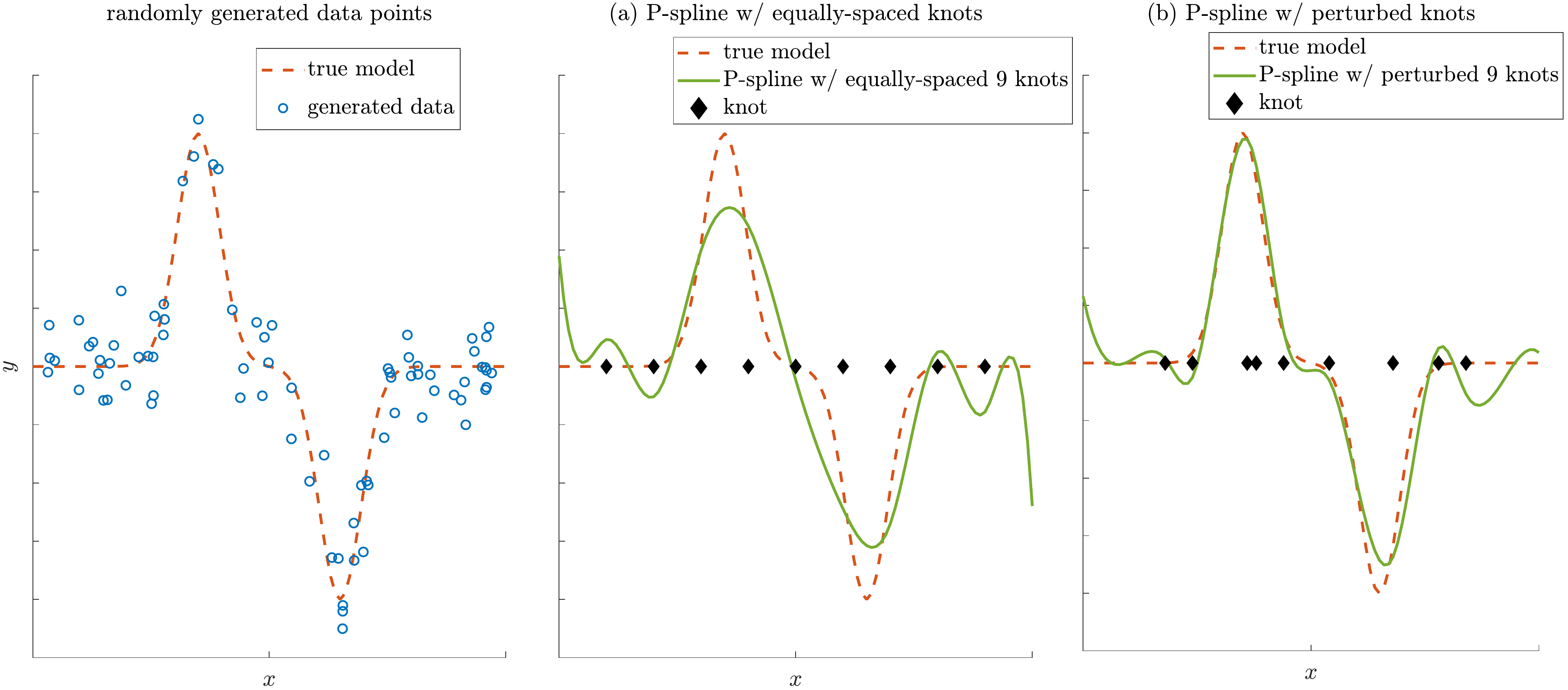}
    \caption{True data generating model and estimated models with two different sets of knots}
    \label{fig:two_estimation_results}
    \begin{quote}\footnotesize
        The leftmost picture depicts the true model $s(x)=0.8\exp(-[16(x - 0.35)]^2)-0.8\exp(-[16(x-0.65)]^2)$ (denoted by orange dashed line) and $80$ randomly data points (blue circles) whose $x_i$ were drawn from $\mathrm{U}(0,1)$ and $y_i$ were given by $s(x_i)+\epsilon_i$ where $\epsilon_i$ were drawn from $\mathrm{U}(0,0.1)$. The panels (a) and (b) show the estimated models (light green lines) computed over different sets of knots (black diamonds $\blacklozenge$). The model in Panel (a) is computed with the equally-spaced knots, and Panel (b) is computed over the knots whose locations are tweaked from the case of Panel (a) while keeping the number of knots the same. While the numbers of knots are the same, the fitting based on the knots with the perturbed locations is apparently improved. 
    \end{quote}
\end{figure}

From Figure \ref{fig:two_estimation_results} we see that the locations of knots affected the fitting, and it plays a significant role in improving the goodness of fit of the B-spline function. 

While it seems interesting to select the locations of knots for B-spline regression, it could be challenging since, 
as explained in the previous subsection, the (regularized) least square method for estimating a B-spine function follows computing the basis functions, which also follows giving a set of knots. 

For the purpose we consider the following strategy:
\begin{enumerate}
    \item Prepare a set of a large number $l-1$ of knots candidates.
    \item For a given integer $k(\ll l-1)$, estimate a model based on P-spline with the condition that allows the use of up to $k$ knots.   
\end{enumerate}
By providing a large number of knots candidates, the locations of the knots used to define the basis functions can be selected almost continuously.
However, it is still not obvious how to express the condition on the number of knots used. 
So in the next subsection we will explain how it is possible.

\subsection{Cardinality-constrained formulation}
To explain our formulation, let us start with the introduction of some notation.

Let 
\begin{align}
\bm{D}^{(1)}_+&\coloneqq
\begin{pmatrix}
0 & 1 &  & & \\
 & \ddots & \ddots & O & \\
 & O & \ddots & \ddots & \\
 & & & 0 & 1 \\
\end{pmatrix}
\in\mathbb{R}^{(l-1)\times l},\quad
\bm{D}^{(1)}_-\coloneqq
\begin{pmatrix}
1 & 0 &  & &  \\
 & \ddots & \ddots & O & \\
 & O & \ddots & \ddots & \\
 &  & & 1 & 0 \\
\end{pmatrix}
\in\mathbb{R}^{(l-1)\times l},
\end{align}
and 
\begin{align}
\bm{D}^{(1)}&\coloneqq\bm{D}^{(1)}_+-\bm{D}^{(1)}_-=
\begin{pmatrix}
-1 & 1 &  & & \\
 & \ddots & \ddots & O & \\
 & O & \ddots & \ddots & \\
 & & & -1 & 1 \\
\end{pmatrix}
\in\mathbb{R}^{(l-1)\times l}.
\end{align}
For $1\le q\le p$, define the $(q+1)$-th order counterparts by
\begin{align}
\bm{D}^{(q+1)}_+ & \coloneqq\bm{D}^{(q)}_+\bm{\Delta}^{(q+1)},\quad \bm{D}^{(q+1)}_-=\bm{D}^{(q)}_-\bm{\Delta}^{(q+1)},
\end{align}
and
\begin{align}
\bm{D}^{(q+1)} & \coloneqq\bm{D}^{(q)}\bm{\Delta}^{(q+1)} = (\bm{D}^{(q)}_+-\bm{D}^{(q)}_-)\bm{\Delta}^{(q+1)} = \bm{D}^{(q+1)}_+-\bm{D}^{(q+1)}_-,
\end{align}
where 
\begin{align}
\bm{\Delta}^{(q+1)} & \coloneqq
\begin{pmatrix}
\frac{-1}{t_1-t_{-q+1}} & \frac{1}{t_1-t_{-q+1}} &  &O& \\
 & \frac{-1}{t_2-t_{-q+2}} & \frac{1}{t_2-t_{-q+2}} & & \\
 & & \ddots & \ddots & \\
 &O& & \frac{-1}{t_{l-1+q}-t_{l-1}} & \frac{1}{t_{l-1+q}-t_{l-1}} \\
\end{pmatrix}
\in\mathbb{R}^{(l-1+q)\times (l+q)}
\label{eq:olDp}
\\
& = 
\begin{pmatrix}
\frac{1}{t_1-t_{-q+1}} &  &O& \\
 & \frac{1}{t_2-t_{-q+2}} & & \\
 & & \ddots & \\
 &O& &\frac{1}{t_{l-1+q}-t_{l-1}} \\
\end{pmatrix}
\begin{pmatrix}
-1 & 1 &  & & \\
 & \ddots & \ddots & O & \\
 & O & \ddots & \ddots & \\
 & & & -1 & 1 \\
\end{pmatrix}.
\label{eq:olDp_decomp}
\end{align}
Note that the rank of $\bm{D}^{(q+1)}$ remains to be $l-1$ because it is a scaled difference matrix.
The diagonal matrix in \eqref{eq:olDp_decomp} is for adjusting the spaces between adjacent knots.
If the equal-spaced knots are employed, the matrix $\bm{\Delta}^{(q+1)}$ turns out to be the ordinary difference matrix. 

\begin{theorem}\label{thm:active_knot_indicator}
Let $s=\sum_{j=-p}^{l-1}\alpha_{j+p+1}B^{(p)}_j$ and $\bm{\alpha}=(\alpha_1,\dots,\alpha_{p+l})^\top$. Then, $s$ does not use the 
$i$-th knot $t_{i}$ if and only if $(\bm{D}^{(p+1)}\bm{\alpha})_{i}=0$.
\end{theorem}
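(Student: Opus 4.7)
The plan is to translate the condition ``$s$ does not use the knot $t_i$'' into a statement about the $p$-th derivative of $s$, and then to compute $s^{(p)}$ via the classical B-spline differentiation formula and match the result with the matrix $\bm{D}^{(p+1)}$. As a first step I would argue that $s$ does not use the knot $t_i$ if and only if $s^{(p)}$ is continuous at $t_i$. A standard property of B-splines with simple knots is that $s\in C^{p-1}$ across every interior knot, so $s^{(p)}$ is piecewise constant with possible jumps only at the knots. If the jump at $t_i$ vanishes, then $s$ is $C^p$ at $t_i$; since $s$ is a polynomial of degree at most $p$ on each side, the two polynomials must then agree on a neighborhood of $t_i$, so $s$ admits a representation as a B-spline of degree $p$ on the knot sequence with $t_i$ removed, i.e., $t_i$ is unused. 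The converse is immediate.

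Next I would invoke the B-spline derivative formula
\[
\frac{d}{dx} B^{(q)}_j(x) = \frac{q}{t_{j+q}-t_{j}} B^{(q-1)}_j(x) - \frac{q}{t_{j+q+1}-t_{j+1}} B^{(q-1)}_{j+1}(x),
\]
from which it follows that, if $s = \sum_j c_j B^{(q)}_j$, then $s' = q \sum_m \tfrac{c_m - c_{m-1}}{t_{m+q} - t_m} B^{(q-1)}_m$. Comparing this scaled first-difference operator on the coefficient vector with the definition of $\bm{\Delta}^{(q+1)}$ in \eqref{eq:olDp}, the two coincide on the entries relevant for $x\in[t_0,t_l)$. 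Iterating this $p$ times starting from the coefficients $\bm{\alpha}$, I would obtain that on $[t_0,t_l)$
\[
s^{(p)}(x) = p! \sum_m \bigl(\bm{\Delta}^{(2)} \bm{\Delta}^{(3)} \cdots \bm{\Delta}^{(p+1)} \bm{\alpha}\bigr)_m\, B^{(0)}_m(x),
\]
so that the jump of $s^{(p)}$ at an interior knot $t_i$ equals $p!\cdot (\bm{D}^{(1)} \bm{\Delta}^{(2)} \cdots \bm{\Delta}^{(p+1)} \bm{\alpha})_i = p!\cdot (\bm{D}^{(p+1)} \bm{\alpha})_i$. Combining with the first step, this jump vanishes if and only if $(\bm{D}^{(p+1)} \bm{\alpha})_i = 0$, yielding the claimed equivalence.

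The main obstacle will be the careful bookkeeping of indices in the iterative differentiation. The matrices $\bm{\Delta}^{(q+1)}$ in the paper have the truncated shape $(l-1+q)\times(l+q)$, dropping the boundary entries that would otherwise appear if all $B^{(q-1)}$ coefficients produced by the derivative formula were retained. One must verify that this truncation is harmless for the interior jumps at $t_1,\dots,t_{l-1}$, namely that the dropped boundary terms only affect $s^{(p)}$ outside $[t_0,t_l)$ and do not alter the rows of $\bm{D}^{(p+1)}$ indexed by the interior knots. This scaling-and-indexing subtlety is presumably the point at which \citet{goepp2018spline} went astray, so particular care is warranted to confirm that the knot-difference denominators in $\bm{\Delta}^{(q+1)}$ do indeed arise from the B-spline derivative formula rather than from a plain (unscaled) finite difference.
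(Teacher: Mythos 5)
Your proposal is correct, and it reaches the same two-part structure as the paper's proof (reduce ``$t_i$ unused'' to agreement of the degree-$p$ information across $t_i$; then identify that information with $\bm{D}^{(p+1)}\bm{\alpha}$), but the second part goes by a genuinely different computation. The paper's Lemma A.1 works directly with the recursive definition $B^{(p)}_j=\frac{x-t_j}{t_{j+p}-t_j}B^{(p-1)}_j+\frac{t_{j+p+1}-x}{t_{j+p+1}-t_{j+1}}B^{(p-1)}_{j+1}$: by induction it tracks the leading (degree-$p$) coefficient of $s$ on each interval and shows that the coefficients on $(t_i,t_{i+1})$ and $(t_{i-1},t_i)$ are $(\bm{D}^{(p+1)}_+\bm{\alpha})_i$ and $(\bm{D}^{(p+1)}_-\bm{\alpha})_i$ respectively, so their difference is $(\bm{D}^{(p+1)}\bm{\alpha})_i$. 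You instead invoke the classical B-spline differentiation formula and iterate it $p$ times to write $s^{(p)}$ as a step function with values $p!\,(\bm{\Delta}^{(2)}\cdots\bm{\Delta}^{(p+1)}\bm{\alpha})_m$, so the jump at $t_i$ is $p!\,(\bm{D}^{(p+1)}\bm{\alpha})_i$; since $s^{(p)}$ is $p!$ times the leading coefficient on each piece, the two computations produce the same quantity. Your route lets you lean on a standard, citable identity and makes the role of the knot-difference denominators in $\bm{\Delta}^{(q+1)}$ transparent (they are exactly the denominators of the derivative formula, which is indeed where the unscaled-difference version in the cited preprint goes wrong); the paper's route is self-contained from the recursion and yields the slightly finer information of the two one-sided leading coefficients via $\bm{D}^{(p+1)}_\pm$, not just their difference. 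The boundary issue you flag is real but handled identically in both arguments: the dropped terms multiply $B^{(q-1)}_{-q}$ and $B^{(q-1)}_{l}$, which vanish identically on $[t_0,t_l)$, so the truncated shape of $\bm{\Delta}^{(q+1)}$ is harmless for the interior rows. Also make sure your first step states the reduction for one-sided polynomial pieces meeting at $t_i$ (two degree-$\le p$ polynomials agreeing in value and derivatives up to order $p$ at $t_i$ are identical), which is exactly the basis argument the paper cites from de Boor.
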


The proof is given in Appendix \ref{subsec:proof_active_knot_indicator}.
A simplified version of the proposition was displayed in the preprint of \cite{goepp2018spline}, but no proof was given therein.
In addition, the claim presented in \cite{goepp2018spline} was an incorrect assertion that Theorem \ref{thm:active_knot_indicator} holds for ordinary difference matrices without assuming the equally-spaced knots (or equivalently, the uniform intervals, namely, all $t_{i+1}-t_i$ are the same constant). On the other hand, Theorem \ref{thm:active_knot_indicator} applies to arbitrary locations of knots with the matrix $\bm{\Delta}^{(q+1)}$. 

Based on Theorem \ref{thm:active_knot_indicator}, the number of employed knots used in the spline can be gauged by the number of non-zero elements of the vector $\bm{D}^{(p+1)}\bm{\alpha}$. 
Motivated by this, we consider the modified formulation for B-spline regression as follows:
\begin{align}
\underset{\bm{\alpha}}{\mbox{minimize}} \quad & f(\bm{\alpha}) \label{obj:ssr0}\\
\mbox{subject to } & \|\bm{D}^{(p+1)}\bm{\alpha}\|_0 \leq K,
\label{eq:l0-constraint}
\end{align}
where $\|\bm{z}\|_0$ denotes the the number of non-zero elements of vector $\bm{z}$, and 
$K\in \{0,\dots,l-1\}$ is a user-defined (hyper-)parameter. 
With the interpretation of the term $\|\bm{D}^{(p+1)}\bm{\alpha}\|_0$ in mind, the interpretation of the hyper-parameter $K$ and the aim of the formulation \eqref{obj:ssr0}--\eqref{eq:l0-constraint} are clear. 

The formulation \eqref{obj:ssr0}--\eqref{eq:l0-constraint} includes two parameters, $c$ in the objective function and $K$ in the constraint, both of which limit the degrees of freedom of the spline and prevent overfitting in different ways. 
The user can calibrate them simultaneously in general, but in order to highlight the performance via the knot selection, we will focus mainly on $c=0$, and then the hyper-parameter becomes only $K$.

The formulation \eqref{obj:ssr0}--\eqref{eq:l0-constraint} is known to be intractable in that (i) the left-hand side of the constraint \eqref{eq:l0-constraint} is a discontinuous function; (ii) the constraint induces combinatorial complexity in the feasible set. 
To tame the intractability we use the continuous exact penalty representation developed in the context of sparse optimization \cite{gotoh2018dc}.

Let $z_{(j)}$ denote the $j$-th largest element of the vector $\bm{z}\coloneqq(z_1,\dots,z_d)^\top$, in absolute value, namely, $|z_{(1)}|\geq |z_{(2)}|\geq \cdots\geq |z_{(d)}|$. 
We define $T_K(\bm{z})$ by the sum of the smallest $d-K$ elements in absolute value, namely, 
\begin{align*}
    T_K(\bm{z})\coloneqq|z_{(K+1)}|+\cdots+|z_{(d)}|.
\end{align*}

It is easy to see that (i) $T_K$ is a continuous function, (ii) $T_K(\bm{z})\geq 0$ holds for any $\bm{z}\in\mathbb{R}^{d}$, and (iii) $T_K(\bm{z})=0$ if and only if $\|\bm{z}\|_0 \leq K$, (or, equivalently, $T_K(\bm{z})>0$ if and only if $\|\bm{z}\|_0 > K$). 
From (ii) and (iii), we see that the function $T_K(\bm{z})$ plays a role of the indicator of the fulfillment of the cardinality constraint $\|\bm{z}\|_0 \leq K$. 
This allows us to replace the constraint \eqref{eq:l0-constraint} with $T_K(\bm{D}^{(p+1)}\bm{\alpha})=0$, leading to an equivalent formulation to \eqref{obj:ssr0}--\eqref{eq:l0-constraint}:
\begin{align}
\underset{\bm{\alpha}}{\mbox{minimize}} \quad & f(\bm{\alpha}) \label{obj:ssr2}\\
\mbox{subject to } & T_K(\bm{D}^{(p+1)}\bm{\alpha})=0.
\label{eq:trimmed-regularized-constraint}
\end{align}
This new formulation \eqref{obj:ssr2}--\eqref{eq:trimmed-regularized-constraint} looks to mitigate the intractability associated with \eqref{obj:ssr0}--\eqref{eq:l0-constraint}, in that the new constraint \eqref{eq:trimmed-regularized-constraint} does not involve any discontinuity unlike \eqref{eq:l0-constraint}. On the other hand, it is still not so tractable in that it keeps having a constraint. 

Associated with the constrained-form optimization problem \eqref{obj:ssr2}--\eqref{eq:trimmed-regularized-constraint}, let us consider the penalty form 
\begin{align}
\underset{\bm{\alpha}}{\mbox{minimize}} \quad & f(\bm{\alpha}) + \gamma T_K(\bm{D}^{(p+1)}\bm{\alpha}), \label{obj:trimmed_penalized}
\end{align}
where $\gamma>0$ is a parameter. 
The function $T_K(\bm{z})$ is sometimes referred to as the {\it trimmed $\ell_1$ regularizer} or {\it partial $\ell_1$ regularizer}, since it can be decomposed as
\begin{align}
T_K(\bm{z})=\|\bm{z}\|_1-\underbrace{(|z_{(1)}|+|z_{(2)}|+\cdots+|z_{(K)}|)}_{\mbox{largest-}K\mbox{ norm}~\equiv~\mbox{trimmed part}}.
\end{align}
The formulation \eqref{obj:trimmed_penalized} is an example of the generalized trimmed lasso \citep{yagishita2022exact}.
Note that the local optimality and d-stationarity of \eqref{obj:trimmed_penalized} are equivalent \citep[Proposition 2]{yagishita2022exact}, where we call $\bm{z}^*$ a d(irectional)-stationary point of $\min_{\bm{z}}\phi(z)$ if $\phi'(\bm{z}^*;\bm{d})\coloneqq\lim_{\xi\searrow 0}\frac{\phi(\bm{z}^*+\xi\bm{d})-\phi(\bm{z}^*)}{\xi}\ge0$ holds for all $\bm{d}$.

The formulation \eqref{obj:trimmed_penalized} looks easier to approach in that we can adapt more algorithms for unconstrained optimization. On the other hand, a new hyper-partameter $\gamma$ is introduced in \eqref{obj:trimmed_penalized}, which might blur the interpretation of the constrained formulation \eqref{obj:ssr2}--\eqref{eq:trimmed-regularized-constraint}.
However, as will be presented later in Theorem \ref{thm:e.p.p.}, we can show that for a sufficiently large $\gamma$, any d-stationary point of the penalty-form \eqref{obj:trimmed_penalized} satisfies the constraint \eqref{eq:trimmed-regularized-constraint} when $c=0$ \citep[Theorem 3]{yagishita2022exact} (the case $c>0$ can be proved using the same approach as for Theorem \ref{thm:e.p.p.}, which is described later).
Therefore, we do not have to bother selecting the value of $\gamma$ from the viewpoint of the interpretation, and accordingly, in this sense, \eqref{obj:trimmed_penalized} only has the single and more intuitive hyper-parameter $K$. (We will recall this after Theorem \ref{thm:e.p.p.}.)

\section{Solution Method}\label{sec:solution-method}
In this section we present an approach that enables us to attain a d-stationary point of \eqref{obj:trimmed_penalized} by applying a proximal gradient-type algorithm after some variable change. 

One of the nice features of the trimmed function $T_K$ is that its proximal operation is easy to implement. 
For a function $\phi:\mathbb{R}^d\to\mathbb{R}\cup\{\infty\}$, the proximal mapping is defined by
\begin{align}
\prox_\phi(\bm{a})\coloneqq\argmin_{\bm{z}}
\big\{ \phi(\bm{z})+\frac{1}{2}\|\bm{z}-\bm{a}\|_2^2\big\}.
\end{align}
It is known that if $\phi$ is a proper closed convex function, the mapping is a singleton.
The trimmed function $T_K$ is not convex, and its proximal mapping is not necessarily to be a singleton. Furthermore, the minimization in the definition of the mapping is a nonconvex optimization.

However, it is known that for a point $\bm{a}=(a_1,\dots,a_d)^\top$, the vector $\bm{a}^+$ defined by
\begin{align}
    (\bm{a}^+)_i=
    \begin{cases}
        a_j, & j\in\mathcal{J},\\
        \mathrm{soft}_{\lambda}(a_j),&j\notin\mathcal{J},
    \end{cases}
\end{align}
is a point in $\prox_{\lambda T_K}(\bm{a})$, where $\mathcal{J}$ is the index set of largest $K$ components of the absolute values of $\bm{a}$, and 
\begin{align}
    \mathrm{soft}_{\lambda}(a)\coloneqq
    \begin{cases}
        a+\lambda,&a<-\lambda,\\
        0,             &-\lambda\le a\le\lambda,\\
        a-\lambda,&a>\lambda,
    \end{cases}
\end{align}
is the soft-thresholding operator on $\mathbb{R}$ (see Theorem 5.4 of \citet{lu2018sparse} or Subsection C.3 of \citet{bertsimas2017trimmed} for the derivation).
Therefore, we can easily find a point in the set $\prox_{T_K}(\bm{a})$ just by finding out largest $K$ elements of the vector $\bm{a}$ and applying the formula given above.

Although the availability of the proximal mapping of $T_K$ is a good news since it might enable us to apply some kinds of proximal gradient-type algorithms. However, unfortunately, we cannot directly apply the proximal gradient algorithms since the related term in the problem \eqref{obj:trimmed_penalized} comes up with the difference matrix $\bm{D}^{(p+1)}$. 
Namely, the proximal map of $T_K(\bm{D}^{(p+1)}\cdot)$ may not be tractable.

Another option is to use the alternating direction method of multipliers (ADMM) (see, e.g., \citep{boyd2011distributed}). 
The global convergence of the ADMM to d-stationary point has been shown \citep[Theorem 3]{yagishita2020pursuit}.
However, under parameter setting where global convergence is guaranteed, practical convergence of the ADMM for \eqref{obj:trimmed_penalized} is slow, as can be seen from Figure \ref{fig:vs-admm} in Subsection \ref{subsec:vs-admm}.

To overcome the difficulty, we here apply a change of variables, following \citet{kim2009ell_1,tibshirani2011solution} and \citet{tibshirani2014adaptive}. 
More specifically, in order to get the representation of the form $T_{K}(\bm{\beta})$ for a vector $\bm{\beta}$ in place of $T_{K}(\bm{D}^{(p+1)}\bm{\alpha})$, we consider a change of variables
\begin{align}
    \bm{\beta}=\bm{D}^{(p+1)}\bm{\alpha}.
\end{align}
We can construct a square matrix
\begin{align}
    \hat{\bm{D}}^{(p+1)}\coloneqq
    \begin{pmatrix}
        \bm{D}^{(p+1)}\\
        \bm{A}
    \end{pmatrix}
    \in\mathbb{R}^{(l+p)\times(l+p)}
\end{align}
so that it will be non-singular using an adequate matrix $\bm{A}$ since $\bm{D}^{(p+1)}$ is of rank $l-1$.
With such an invertible matrix $\hat{\bm{D}}^{(p+1)}$, consider the one-to-one variable change
\begin{align}
    \begin{pmatrix}
        \bm{\beta}\\
        \bm{\beta}'
    \end{pmatrix}
    =\hat{\bm{D}}^{(p+1)}\bm{\alpha}
    \quad\leftrightarrow\quad \bm{\alpha}=\bm{\Sigma}^{(p+1)}
    \begin{pmatrix}
        \bm{\beta}\\
        \bm{\beta}'
    \end{pmatrix},
\end{align}
where $\bm{\Sigma}^{(p+1)}\coloneqq\big(\hat{\bm{D}}^{(p+1)}\big)^{-1}$.
An example of $\hat{\bm{D}}^{(p+1)}$ (or $\bm{A}$) and the fact that $\bm{\Sigma}^{(p+1)}$ can be easily computed for the example are shown in Appendix \ref{sec:diff-mat}. 
Using this, the problem \eqref{obj:trimmed_penalized} can be equivalently rewritten as
\begin{align}
\underset{\bm{\beta},\bm{\beta}'}{\mbox{minimize}} \quad &
G(\bm{\beta},\bm{\beta}')\coloneqq\frac{1}{2}\big\|\bm{y}-\bm{B}^{(p)}\bm{\Sigma}_1\bm{\beta}-\bm{B}^{(p)}\bm{\Sigma}_2\bm{\beta}'\big\|_2^2+\frac{c}{2}\big\|\bm{D}\bm{\Sigma}_1\bm{\beta}+\bm{D}\bm{\Sigma}_2\bm{\beta}'\big\|_2^2+\gamma T_K(\bm{\beta}), \label{obj:trimmed_penalized_TF}
\end{align}
where $\bm{\Sigma}_1$ and $\bm{\Sigma}_2$ are $(l+p)\times(l-1)$ and $(l+p)\times(p+1)$ (sub)matrices, respectively, such that
\begin{align}
    \bm{\Sigma}^{(p+1)}=
    \begin{pmatrix}
        \bm{\Sigma}_1 & \bm{\Sigma}_2
    \end{pmatrix}.
\end{align}

Although proximal gradient algorithms can be applied to \eqref{obj:trimmed_penalized_TF} efficiently, we further consider reducing variables by minimizing \eqref{obj:trimmed_penalized_TF} with respect to $\bm{\beta}'$.
We additionally consider the following assumption. 
\begin{assumption}\label{assump:pdf}
$\bm{B}^{(p)}\bm{\Sigma}_2$ is a full column rank matrix. 
\end{assumption}
Note that Assumption \ref{assump:pdf} implies that the Hessian of $G(\bm{\beta},\bm{\beta}')$ with respect to $\bm{\beta}'$ is positive definite, and $\arg\min_{\bm{\beta}'}G(\bm{\beta},\bm{\beta}')$ is a singleton. 
It is easy to see that Assumption \ref{assump:pdf} holds if and only if the fitting of (non-piecewise) polynomial of order $p$ to the data by the ordinary least squares is uniquely determined, and hence it is not restrictive to suppose it. 

Under Assumption \ref{assump:pdf}, the problem \eqref{obj:trimmed_penalized_TF} is reduced to
\begin{align}
\underset{\bm{\beta}}{\mbox{minimize}} \quad
F(\bm{\beta})\coloneqq\frac{1}{2}\big\|\bm{z}_1-\bm{L}_1\bm{\beta}\big\|_2^2+\frac{c}{2}\big\|\bm{z}_2-\bm{L}_2\bm{\beta}\big\|_2^2+\gamma T_K(\bm{\beta}), \label{obj:trimmed_penalized_TF2}
\end{align}
where
\begin{align}
    \bm{H}_1 &\coloneqq\big((\bm{B}^{(p)}\bm{\Sigma}_2)^\top\bm{B}^{(p)}\bm{\Sigma}_2+c(\bm{D}\bm{\Sigma}_2)^\top\bm{D}\bm{\Sigma}_2\big)^{-1}(\bm{B}^{(p)}\bm{\Sigma}_2)^\top,\\
    \bm{H}_2 &\coloneqq\big((\bm{B}^{(p)}\bm{\Sigma}_2)^\top\bm{B}^{(p)}\bm{\Sigma}_2+c(\bm{D}\bm{\Sigma}_2)^\top\bm{D}\bm{\Sigma}_2\big)^{-1}\big\{(\bm{B}^{(p)}\bm{\Sigma}_2)^\top\bm{B}^{(p)}\bm{\Sigma}_1+c(\bm{D}\bm{\Sigma}_2)^\top\bm{D}\bm{\Sigma}_1\big\},\\
    \bm{z}_1 &\coloneqq\bm{y}-\bm{B}^{(p)}\bm{\Sigma}_2\bm{H}_1\bm{y}, \quad \bm{z}_2\coloneqq\bm{D}\bm{\Sigma}_2\bm{H}_1\bm{y}, \quad \bm{L}_1\coloneqq\bm{B}^{(p)}\bm{\Sigma}_1-\bm{B}^{(p)}\bm{\Sigma}_2\bm{H}_2, \quad \bm{L}_2\coloneqq\bm{D}\bm{\Sigma}_2\bm{H}_2-\bm{D}\bm{\Sigma}_1.
\end{align}
The local optimality and d-stationarity of \eqref{obj:trimmed_penalized_TF2} are equivalent for the same reason as \eqref{obj:trimmed_penalized}.
The relationship between solutions of \eqref{obj:trimmed_penalized} and \eqref{obj:trimmed_penalized_TF2} is as follows.

\begin{proposition}\label{prop:relation}
Suppose Assumption \ref{assump:pdf}, and 
let $g(\bm{\beta})\coloneqq\argmin_{\bm{\beta}'}G(\bm{\beta},\bm{\beta}')=\bm{H}_1\bm{y}-\bm{H}_2\bm{\beta}$.
\begin{enumerate}[(a)]
    \item If $\bm{\beta}^*$ be a local minimum of \eqref{obj:trimmed_penalized_TF2}, then $\bm{\Sigma}^{(p+1)}(\bm{\beta}^{*\top},g(\bm{\beta}^*)^\top)^\top$ is also locally optimal to \eqref{obj:trimmed_penalized}.
    \item If $\bm{\beta}^*$ be an optimal solution of \eqref{obj:trimmed_penalized_TF2}, then $\bm{\Sigma}^{(p+1)}(\bm{\beta}^{*\top},g(\bm{\beta}^*)^\top)^\top$ is also optimal to \eqref{obj:trimmed_penalized}.
\end{enumerate}
\end{proposition}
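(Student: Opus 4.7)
The overarching strategy is to factor the argument through the intermediate unreduced form \eqref{obj:trimmed_penalized_TF}. Because $\hat{\bm{D}}^{(p+1)}$ is invertible, the map $\bm{\alpha}\mapsto(\bm{\beta},\bm{\beta}')=\hat{\bm{D}}^{(p+1)}\bm{\alpha}$ is a linear homeomorphism under which the objective of \eqref{obj:trimmed_penalized} coincides with $G(\bm{\beta},\bm{\beta}')$, so local and global minimizers correspond bijectively via $\bm{\alpha}=\bm{\Sigma}^{(p+1)}(\bm{\beta}^\top,\bm{\beta}'^\top)^\top$. The task thus reduces to relating minimizers of $G$ to those of $F$.

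First I would record that under Assumption \ref{assump:pdf} the map $\bm{\beta}'\mapsto G(\bm{\beta},\bm{\beta}')$ is strongly convex: its Hessian $(\bm{B}^{(p)}\bm{\Sigma}_2)^\top\bm{B}^{(p)}\bm{\Sigma}_2+c(\bm{D}\bm{\Sigma}_2)^\top\bm{D}\bm{\Sigma}_2$ is positive definite already from the first term. Setting its gradient to zero yields the unique minimizer $g(\bm{\beta})=\bm{H}_1\bm{y}-\bm{H}_2\bm{\beta}$, and a direct substitution into $G$ confirms both the identity $F(\bm{\beta})=G(\bm{\beta},g(\bm{\beta}))$ and the uniform lower bound $G(\bm{\beta},\bm{\beta}')\ge F(\bm{\beta})$ that is valid for every $\bm{\beta}'$.

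For part (b), the chain
\[
\min_{\bm{\alpha}}\{f(\bm{\alpha})+\gamma T_K(\bm{D}^{(p+1)}\bm{\alpha})\}=\min_{\bm{\beta},\bm{\beta}'}G(\bm{\beta},\bm{\beta}')=\min_{\bm{\beta}}\min_{\bm{\beta}'}G(\bm{\beta},\bm{\beta}')=\min_{\bm{\beta}}F(\bm{\beta}),
\]
combined with the uniqueness of the inner minimizer, shows that if $\bm{\beta}^*$ minimizes $F$ then $(\bm{\beta}^*,g(\bm{\beta}^*))$ minimizes $G$ globally, and the homeomorphism transports this back to \eqref{obj:trimmed_penalized}.

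For part (a), I would pick a neighborhood $U$ of $\bm{\beta}^*$ on which $F(\bm{\beta})\ge F(\bm{\beta}^*)$. For any $\bm{\beta}\in U$ and \emph{any} $\bm{\beta}'\in\mathbb{R}^{p+1}$, the sandwich
\[
G(\bm{\beta},\bm{\beta}')\ge F(\bm{\beta})\ge F(\bm{\beta}^*)=G(\bm{\beta}^*,g(\bm{\beta}^*))
\]
certifies $(\bm{\beta}^*,g(\bm{\beta}^*))$ as a local minimum of $G$, and the linear homeomorphism transports this back to the claimed local minimum of \eqref{obj:trimmed_penalized}. I expect this local argument to be the main subtlety of the proof: a naive attempt would try to invoke continuity of $g$ to control $\bm{\beta}'$ inside a product neighborhood of $(\bm{\beta}^*,g(\bm{\beta}^*))$, but the uniform lower bound $G(\bm{\beta},\bm{\beta}')\ge F(\bm{\beta})$ makes that detour unnecessary, so the $\bm{\beta}'$-component of the neighborhood can in fact be taken to be all of $\mathbb{R}^{p+1}$.
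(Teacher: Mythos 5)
Your proof is correct and follows essentially the same route as the paper's: both rest on the identity $F(\bm{\beta})=G(\bm{\beta},g(\bm{\beta}))$ together with the partial-minimization bound $G(\bm{\beta},\bm{\beta}')\ge F(\bm{\beta})$, yielding the sandwich $G(\bm{\beta}^*,g(\bm{\beta}^*))=F(\bm{\beta}^*)\le F(\bm{\beta})\le G(\bm{\beta},\bm{\beta}')$ on a neighborhood that is unrestricted in the $\bm{\beta}'$-direction, and then transport the conclusion to \eqref{obj:trimmed_penalized} via the invertibility of $\hat{\bm{D}}^{(p+1)}$. Your observation that no continuity argument for $g$ is needed is exactly the point the paper's one-line proof exploits.
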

The proof of Proposition \ref{prop:relation} is given in Appendix \ref{subsec:proof_relation}.

The statement (a) of Proposition \ref{prop:relation} will be more useful than (b) later, in that global optimality of nonconvex optimization is in general hard to guarantee within a practical amount of time for large-sized problems and we can prove that the employed proximal-gradient algorithm (Algorithm \ref{alg:GIST}) generates sequences that converge to local minima of \eqref{obj:trimmed_penalized_TF2} and \eqref{obj:trimmed_penalized} (Theorem \ref{thm:global-convergence}). 

Using Lemma 4 of \citet{yagishita2022exact} and Proposition \ref{prop:relation}, we have the following result, which is an extension of Theorem 2.1 of \citet{amir2021trimmed}.
See Appendix \ref{subsec:proof_e.p.p.} for the proof.
\begin{theorem}\label{thm:e.p.p.}
Suppose Assumption \ref{assump:pdf}, and that
\begin{align}\label{eq:e.p.p.}
    \gamma>\max_{j=1,\ldots,l-1}\left\{\|\bm{l}^{(1)}_j\|_2+\sqrt{c}\|\bm{l}^{(2)}_j\|_2\right\}\sqrt{\|\bm{z}_1\|_2^2+c\|\bm{z}_2\|_2^2},
\end{align}
where $\bm{l}^{(1)}_j$ and $\bm{l}^{(2)}_j$ denote the $j$-th column of $\bm{L}_1$ and $\bm{L}_2$, respectively.
Let $\bm{\beta}^*$ be a local minimum of \eqref{obj:trimmed_penalized_TF2}.
Then it holds that $\|\bm{\beta}^*\|_0\le K$, which implies that $\bm{\Sigma}^{(p+1)}(\bm{\beta}^{*\top},g(\bm{\beta}^*)^\top)^\top$ is a local minimum of \eqref{obj:trimmed_penalized} satisfying the constraint \eqref{eq:l0-constraint}.
\end{theorem}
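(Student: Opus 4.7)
The plan is to invoke Lemma 4 of \citet{yagishita2022exact}, which for a generalized trimmed lasso problem $\min_{\bm{\beta}} h(\bm{\beta}) + \gamma T_K(\bm{\beta})$ with smooth $h$ gives a sufficient condition on $\gamma$ under which every d-stationary point is $K$-sparse. Since local minima of \eqref{obj:trimmed_penalized_TF2} coincide with its d-stationary points (noted just after \eqref{obj:trimmed_penalized}), once the hypothesis of Lemma 4 is verified we obtain $\|\bm{\beta}^*\|_0 \le K$. Proposition \ref{prop:relation}(a), which uses Assumption \ref{assump:pdf}, then lifts $\bm{\beta}^*$ to $\bm{\alpha}^* \coloneqq \bm{\Sigma}^{(p+1)}(\bm{\beta}^{*\top}, g(\bm{\beta}^*)^\top)^\top$, a local minimum of \eqref{obj:trimmed_penalized}; and because the change of variables was built so that $\bm{D}^{(p+1)}\bm{\alpha}^* = \bm{\beta}^*$, the cardinality bound $\|\bm{D}^{(p+1)}\bm{\alpha}^*\|_0 = \|\bm{\beta}^*\|_0 \le K$ is exactly the constraint \eqref{eq:l0-constraint}.

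The concrete work is to translate the abstract hypothesis of Lemma 4 into the explicit bound \eqref{eq:e.p.p.}. Writing $F(\bm{\beta}) = h(\bm{\beta}) + \gamma T_K(\bm{\beta})$ with smooth part
\[
h(\bm{\beta}) = \tfrac{1}{2}\|\bm{z}_1 - \bm{L}_1\bm{\beta}\|_2^2 + \tfrac{c}{2}\|\bm{z}_2 - \bm{L}_2\bm{\beta}\|_2^2,
\]
differentiation yields $\frac{\partial h}{\partial \beta_j}(\bm{\beta}) = -\bm{l}^{(1)\top}_j(\bm{z}_1 - \bm{L}_1\bm{\beta}) - c\,\bm{l}^{(2)\top}_j(\bm{z}_2 - \bm{L}_2\bm{\beta})$. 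Applying Cauchy--Schwarz coordinatewise and then the trivial bounds that both $\|\bm{z}_1 - \bm{L}_1\bm{\beta}\|_2$ and $\sqrt{c}\,\|\bm{z}_2 - \bm{L}_2\bm{\beta}\|_2$ are at most $\sqrt{\|\bm{z}_1 - \bm{L}_1\bm{\beta}\|_2^2 + c\|\bm{z}_2 - \bm{L}_2\bm{\beta}\|_2^2}$ gives the clean estimate
\[
\left|\tfrac{\partial h}{\partial \beta_j}(\bm{\beta})\right| \le \bigl(\|\bm{l}^{(1)}_j\|_2 + \sqrt{c}\,\|\bm{l}^{(2)}_j\|_2\bigr)\sqrt{2h(\bm{\beta})},
\]
which, evaluated at $\bm{\beta}=\bm{0}$ (where $\sqrt{2h(\bm{0})} = \sqrt{\|\bm{z}_1\|_2^2 + c\|\bm{z}_2\|_2^2}$), matches the right-hand side of \eqref{eq:e.p.p.} exactly.

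The main obstacle is passing from this gradient bound at the reference point $\bm{0}$ to the same bound at the arbitrary local minimum $\bm{\beta}^*$---equivalently, controlling the residual at $\bm{\beta}^*$ by the residual at $\bm{0}$ even though $\bm{\beta}^*$ is only a local, not a global, minimum of a non-convex objective. Elementary first- or second-order perturbation arguments (shrinking or truncating the trimmed components of $\bm{\beta}^*$, or swapping a trimmed nonzero with a top-$K$ index) turn out to be tight at the d-stationarity equalities and yield no contradiction. This is precisely where Lemma 4 of \citet{yagishita2022exact} does the heavy lifting: it exploits the specific piecewise-linear structure of $T_K$ together with the d-stationarity conditions to furnish the required descent-type comparison. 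All remaining steps---the gradient computation, the weighted Cauchy--Schwarz manipulation, and the lift through Proposition \ref{prop:relation}(a)---are routine.
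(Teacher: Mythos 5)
Your overall architecture matches the paper's: compute $\nabla h$, apply Cauchy--Schwarz columnwise, feed the resulting bound into Lemma 4 of \citet{yagishita2022exact} to get $\|\bm{\beta}^*\|_0\le K$, and lift through Proposition \ref{prop:relation}(a) using $\bm{D}^{(p+1)}\bm{\Sigma}^{(p+1)}(\bm{\beta}^{*\top},g(\bm{\beta}^*)^\top)^\top=\bm{\beta}^*$. You also correctly isolate the crux: Cauchy--Schwarz bounds $|\partial h/\partial\beta_j(\bm{\beta}^*)|$ in terms of the residual \emph{at} $\bm{\beta}^*$, while the threshold \eqref{eq:e.p.p.} involves the residual at $\bm{0}$, so one must show $\|\bm{z}_1-\bm{L}_1\bm{\beta}^*\|_2^2+c\|\bm{z}_2-\bm{L}_2\bm{\beta}^*\|_2^2\le\|\bm{z}_1\|_2^2+c\|\bm{z}_2\|_2^2$. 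But you then leave exactly that step unresolved: you assert that elementary perturbation arguments fail and that Lemma 4 itself supplies the comparison. Neither claim is right. Lemma 4, as the paper uses it, takes as input the bound $\gamma>\nabla h(\bm{\beta}^*)^\top\bm{d}$ over sign-pattern directions \emph{at the stationary point itself} and outputs sparsity; it does not relate the residuals at $\bm{\beta}^*$ and at $\bm{0}$.

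The missing step is in fact elementary. Test d-stationarity in the single direction $\bm{d}=-\bm{\beta}^*$: since $T_K$ is positively homogeneous of degree one, $T_K(\bm{\beta}^*+\xi(-\bm{\beta}^*))=(1-\xi)T_K(\bm{\beta}^*)$ for $\xi\in[0,1]$, so $T_K'(\bm{\beta}^*;-\bm{\beta}^*)=-T_K(\bm{\beta}^*)\le0$, and $F'(\bm{\beta}^*;-\bm{\beta}^*)\ge0$ forces $\nabla h(\bm{\beta}^*)^\top(\bm{0}-\bm{\beta}^*)\ge\gamma T_K(\bm{\beta}^*)\ge0$. Convexity of $h$ then gives $h(\bm{0})\ge h(\bm{\beta}^*)+\nabla h(\bm{\beta}^*)^\top(\bm{0}-\bm{\beta}^*)\ge h(\bm{\beta}^*)$, which is precisely the needed residual comparison; splitting it via $\|\bm{v}\|_2\le\sqrt{\|\bm{v}\|_2^2+c\|\bm{w}\|_2^2}$ and $\sqrt{c}\|\bm{w}\|_2\le\sqrt{\|\bm{v}\|_2^2+c\|\bm{w}\|_2^2}$ feeds your Cauchy--Schwarz estimate and reproduces the right-hand side of \eqref{eq:e.p.p.}. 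With that one paragraph inserted, your argument coincides with the paper's proof; everything else you wrote (gradient computation, the weighted Cauchy--Schwarz manipulation, the lift via Proposition \ref{prop:relation}) is as in the paper.
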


Theorem \ref{thm:e.p.p.} indicates that we can obtain a locally optimal solution of \eqref{obj:ssr0}--\eqref{eq:l0-constraint} by solving the problem \eqref{obj:trimmed_penalized_TF2} for sufficiently large $\gamma$. 
In this sense, the penalty parameter $\gamma$ no longer needs tuning, only the more interpretable parameter $K$.

To obtain a locally optimal solution of \eqref{obj:trimmed_penalized_TF2}, we propose using the general iterative shrinkage and thresholding (GIST) algorithm \citep{wright2009sparse,gong2013general}.
In addition to the proximal operation at each iteration, the GIST employs a line search, which is enhanced with the so-called Barzilai-Borwein (BB) initialization rule \citep{barzilai1988two} and a nonmonotone line search \citep{grippo1986nonmonotone,grippo2002nonmonotone}. 
With the BB rule \eqref{eq:BB}, we can set the (initial) step size $\eta_t$ so that the line search can start from the stepsize which reflects the curvature of the smooth term of the objective function. 
On the other hand, with the termination condition \eqref{eq:nmt} for the line search, we can quit the line search while approximately ensuring the decrease of the objective values. 
The algorithm we apply is described in Algorithm \ref{alg:GIST}, where we denote by $h$ the first two terms of \eqref{obj:trimmed_penalized_TF2}, namely, $h(\bm{\beta})\coloneqq\frac{1}{2}\big\|\bm{z}_1-\bm{L}_1\bm{\beta}\big\|_2^2+\frac{c}{2}\big\|\bm{z}_2-\bm{L}_2\bm{\beta}\big\|_2^2$.

\begin{algorithm}[tb]
   \caption{General Iterative Shrinkage and Thresholding Algorithm for \eqref{obj:trimmed_penalized_TF2}}
   \label{alg:GIST}
\begin{algorithmic}
\STATE {\bf Input:} Problem \eqref{obj:trimmed_penalized_TF}; $\rho>1$; $0<\underline{\eta}\le\overline{\eta}$; $\sigma\in(0,1)$; $M\geq 1$ an integer.
\STATE {\bf Initialize:} $\bm{\beta}_0\in\mathbb{R}^d$, $\eta_0\in\big[\underline{\eta}/\rho,\overline{\eta}/\rho\big]$, $t\leftarrow0$
\REPEAT
 \REPEAT
 \STATE $\eta_t\leftarrow \rho\eta_t$
 \STATE $\bm{\beta}_{t+1}\in \mathrm{Prox}_{\gamma T_K/\eta_t}\left(\bm{\beta}_t -\frac{1}{\eta_t}\nabla h(\bm{\beta}_t)\right)$
 \UNTIL{line search condition 
 \begin{align}
F(\bm{\beta}_{t+1})\le\max_{\max\{t-M+1,0\}\le s\le t} F(x^s)-\frac{\sigma\eta_t}{2}\|\bm{\beta}_{t+1}-\bm{\beta}_t\|^2,
\label{eq:nmt}
\end{align}
holds.
 }
\STATE Set
\begin{align}
\eta_{t+1}
=
\min\left\{\overline{\eta}, \max\left\{\underline{\eta},\frac{(\nabla h(\bm{\beta}_{t+1})-\nabla h(\bm{\beta}_{t}))^\top(\bm{\beta}_{t+1}-\bm{\beta}_{t})}{\|\bm{\beta}_{t+1}-\bm{\beta}_{t}\|^2}\right\}\right\}/\rho,
\label{eq:BB}
\end{align}
\STATE Set $t\leftarrow t+1$.
\UNTIL{some stopping criterion is satisfied}
\end{algorithmic}
\end{algorithm}

For the application of Algorithm \ref{alg:GIST} to \eqref{obj:trimmed_penalized_TF2}, we have the following global convergence result.
The proof is given in Appendix \ref{subsec:proof_global-convergence}.

\begin{theorem}\label{thm:global-convergence}
Any accumulation point of $\{\bm{\beta}_t\}$ is a local minimum of \eqref{obj:trimmed_penalized_TF2}, and any accumulation point of $\big\{\bm{\Sigma}^{(p+1)}(\bm{\beta}_t^\top,g(\bm{\beta}_t)^\top)^\top\big\}$ is a local minimum of \eqref{obj:trimmed_penalized}.
\end{theorem}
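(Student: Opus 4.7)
The plan is to deduce local optimality at accumulation points in two stages: first for the reduced problem \eqref{obj:trimmed_penalized_TF2} via GIST's subsequential convergence theory, then for the original penalized problem \eqref{obj:trimmed_penalized} by transporting the conclusion through Proposition \ref{prop:relation}(a). I will exploit two structural facts that are already in hand: (i) the smooth part $h(\bm{\beta}) = \tfrac{1}{2}\|\bm{z}_1 - \bm{L}_1\bm{\beta}\|_2^2 + \tfrac{c}{2}\|\bm{z}_2 - \bm{L}_2\bm{\beta}\|_2^2$ is a convex quadratic, so $\nabla h$ is globally Lipschitz; (ii) as recorded after \eqref{obj:trimmed_penalized}, d-stationarity and local optimality coincide for the generalized trimmed lasso \citep[Proposition 2]{yagishita2022exact}, and the same holds for \eqref{obj:trimmed_penalized_TF2} since it is of the same form.

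First I would check that each outer iteration is well defined and that the inner line search terminates. Because $\nabla h$ is $L_h$-Lipschitz, the descent lemma applied to the surrogate $h(\bm{\beta}_t) + \langle \nabla h(\bm{\beta}_t), \bm{\beta} - \bm{\beta}_t\rangle + \tfrac{\eta_t}{2}\|\bm{\beta}-\bm{\beta}_t\|_2^2 + \gamma T_K(\bm{\beta})$ guarantees \eqref{eq:nmt} (in fact with $F(\bm{\beta}_t)$ on the right) once $\eta_t \ge L_h/(1-\sigma)$, which the doubling rule reaches in finitely many steps; consequently $\eta_t$ stays inside a compact interval $[\underline\eta/\rho,\bar\eta]$ for some $\bar\eta$ depending on $L_h,\sigma,\rho$. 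The standard nonmonotone-line-search analysis of GIST \citep{gong2013general}, combined with $F\ge 0$, then yields $\sum_t \|\bm{\beta}_{t+1}-\bm{\beta}_t\|_2^2 < \infty$, so $\bm{\beta}_{t+1} - \bm{\beta}_t \to 0$.

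Next, let $\bm{\beta}^*$ be an accumulation point along $\bm{\beta}_{t_k}\to\bm{\beta}^*$; by the compactness of the $\eta$-range I may pass to a further subsequence so that $\eta_{t_k}\to\eta^* \in [\underline\eta/\rho,\bar\eta]$. The proximal update states that $\bm{\beta}_{t_k+1}$ minimises $\tfrac{\eta_{t_k}}{2}\|\bm{\beta} - (\bm{\beta}_{t_k} - \tfrac{1}{\eta_{t_k}}\nabla h(\bm{\beta}_{t_k}))\|_2^2 + \gamma T_K(\bm{\beta})$. Combining $\bm{\beta}_{t_k+1}-\bm{\beta}_{t_k}\to 0$, continuity of $T_K$, and continuity of $\nabla h$, passage to the limit gives
\begin{equation*}
\bm{\beta}^* \in \argmin_{\bm{\beta}} \bigl\{\langle \nabla h(\bm{\beta}^*),\bm{\beta}-\bm{\beta}^*\rangle + \tfrac{\eta^*}{2}\|\bm{\beta}-\bm{\beta}^*\|_2^2 + \gamma T_K(\bm{\beta})\bigr\}.
\end{equation*}
Testing the minimisation along a direction $\xi\bm{d}$ with $\xi\searrow 0$, the quadratic term vanishes to first order and one obtains $\langle \nabla h(\bm{\beta}^*),\bm{d}\rangle + \gamma\,T_K'(\bm{\beta}^*;\bm{d}) \ge 0$ for every $\bm{d}$; this is exactly d-stationarity of $F$ at $\bm{\beta}^*$, which by the equivalence cited above promotes $\bm{\beta}^*$ to a local minimum of \eqref{obj:trimmed_penalized_TF2}. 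Finally, since $g(\bm{\beta}) = \bm{H}_1\bm{y} - \bm{H}_2\bm{\beta}$ is affine, the map $\bm{\beta}\mapsto \bm{\Sigma}^{(p+1)}(\bm{\beta}^\top,g(\bm{\beta})^\top)^\top$ is continuous, so accumulation points of $\{\bm{\Sigma}^{(p+1)}(\bm{\beta}_t^\top,g(\bm{\beta}_t)^\top)^\top\}$ are images of accumulation points of $\{\bm{\beta}_t\}$, and Proposition \ref{prop:relation}(a) closes the argument. The main delicate step is the limit-passing in Step 3: one has to make sure that the minimisation property of the prox step survives in the limit even though $T_K$ is nonconvex, which works precisely because $T_K$ is continuous and the quadratic surrogate is strongly convex with a bounded modulus $\eta^*>0$.
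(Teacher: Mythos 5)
Your proof is correct and follows essentially the same route as the paper's: establish boundedness of $\eta_t$ and $\|\bm{\beta}_{t+1}-\bm{\beta}_t\|_2\to 0$, pass to the limit in the optimality of the proximal subproblem to obtain d-stationarity of the accumulation point, invoke the equivalence of d-stationarity and local optimality for the generalized trimmed lasso, and transport the conclusion to \eqref{obj:trimmed_penalized} via continuity of the affine map $\bm{\beta}\mapsto\bm{\Sigma}^{(p+1)}(\bm{\beta}^\top,g(\bm{\beta})^\top)^\top$ and Proposition \ref{prop:relation}(a). The only cosmetic difference is that you claim full summability $\sum_t\|\bm{\beta}_{t+1}-\bm{\beta}_t\|_2^2<\infty$, which is stronger than what the standard nonmonotone ($M>1$) analysis delivers; the paper instead derives $\|\bm{\beta}_{t+1}-\bm{\beta}_t\|_2\to 0$ from uniform continuity of $F$ on the initial level set as in Lemma 4 of \citet{wright2009sparse}, but either way the needed conclusion holds.
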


\citet{nakayama2021superiority} have shown the global convergence of the GIST to d-stationary points for a general composite optimization problem.
Although applying Theorem 1 of \citet{nakayama2021superiority} to our formulation \eqref{obj:trimmed_penalized_TF2} requires the boundedness of the generated sequence of the GIST to be assumed, Theorem \ref{thm:global-convergence} does not require it.

\section{Numerical Examples}\label{sec:experiments}
To demonstrate how the proposed methodology works, some numerical examples are presented in this section.
The first two subsections are devoted to demonstrating the validity of our solution method.
In the last subsection, effectiveness of our methodology is shown by comparing with the A-spline \citep{goepp2018spline} that aspires to select knots automatically based on Theorem \ref{thm:active_knot_indicator} just like ours.

Through the experiments, we consider the case $p=3$, namely, cubic spline and equally-spaced knots. 
To focus on the regularization by limiting the number of knots, we did not to apply the smoothing regularization \eqref{smoothing_regularizer}, i.e., we set $c=0$, except to compare the sensitivity of the computation time to the roughness penalty parameter $c$ in Figure \ref{fig:vs-monotone-c}, which is based on the ordinary second order difference matrix as $\bm{D}$ in \eqref{smoothing_regularizer}. 
Based on Theorem \ref{thm:e.p.p.}, the parameter $\gamma$ of \eqref{obj:trimmed_penalized}, \eqref{obj:trimmed_penalized_TF}, or \eqref{obj:trimmed_penalized_TF2} was set as $1.001$ times the right-hand side of \eqref{eq:e.p.p.} so as to ensure $\gamma$ is large enough to serve as the exact penalty. 
As for setting of Algorithm \ref{alg:GIST}, we used $\rho=2,~ \underline{\eta}=10^{-6} ,~ \overline{\eta}=10^6,~ \sigma=10^{-2},~ \bm{\beta}_0=0$, and $\eta_0=1$.
All the real data sets used here are available in the \texttt{R} package \texttt{SemiPar} and response variables of them were standardized.
We set $t_0=0,~ t_l=1$ for synthetic data sets and $t_0=\min_ix_i-10^{-3}\times(\max_ix_i-\min_ix_i),~ t_l=\max_ix_i+10^{-3}\times(\max_ix_i-\min_ix_i)$ for real data sets.

\subsection{Comparison with ADMM and GIST without reducing variables}\label{subsec:vs-admm}
To see the effect of the change of variables and reducing variables, our solution method is compared with the ADMM for \eqref{obj:trimmed_penalized} and the GIST for \eqref{obj:trimmed_penalized_TF}.
We consider applying them with $c=0$ to the LIDAR data ($n=221$) and the term structure data ($n=117$).
Each algorithm solved the problems of selecting $K=10$ knots from $l=50, 100$ candidates.

The penalty parameter of ADMM was determined to have global convergence property (see Subsection 4.4 of \citet{yagishita2020pursuit}).
The parameter settings for the GIST for \eqref{obj:trimmed_penalized_TF} were the same as for Algorithm \ref{alg:GIST}, and the monotone line search ($M=1$) and nonmonotone line search ($M=10$) were considered for them.
Initial points of the ADMM and the GIST were set as $\bm{\Sigma}^{(p+1)}(0^\top,(\bm{H}_1\bm{y})^\top)^\top$ and $(0^\top,(\bm{H}_1\bm{y})^\top)^\top$, and hence initial function values of them and our solution method have the same value.

\begin{figure}[ht]
    \centering
    \includegraphics[width=\columnwidth]{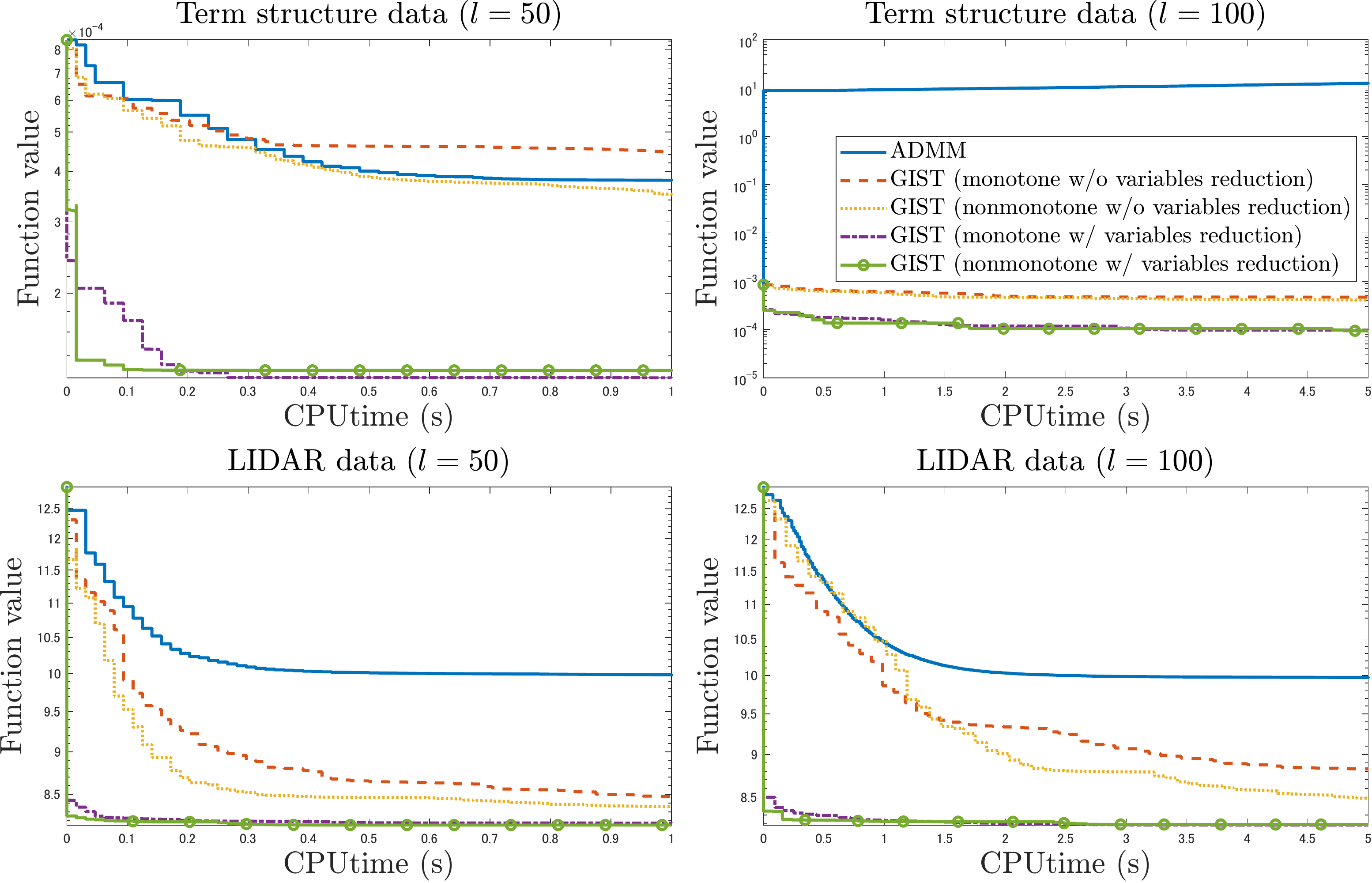}
    \caption{Convergence behaviour of each method for the LIDAR data and term structure data}
    \label{fig:vs-admm}
\end{figure}

Figure \ref{fig:vs-admm} shows plots of the objective function value per CPU time in seconds.
We see from Figure \ref{fig:vs-admm} that our proposed methods converge faster than the others.
Especially, the ADMM behaves very poorly for the term structure data with $l=100$ where the number of data samples, $n$, is not large enough for the number of candidates of knots, $l$.
This implies that the ADMM is not appropriate for our aim of selecting knots from many knots candidates.
On the other hand, although the nonmonotone line search seems to be a bit faster than the monotonic one, it is not clear.
So we investigate the effectiveness of the nonmonotone line search in detail in the next subsection.

\subsection{Effectiveness of nonmonotone line search}\label{subsec:vs-monotone}
To investigate the effectiveness of the nonmonotone line search ($M=10$), time to convergence was compared to the monotonic one ($M=1$) under several settings.
More precisely, we measured CPU times to fulfill the stopping criterion $\|\bm{\beta}_t-\bm{\beta}_{t-1}\|_2\le\sqrt{K(l-1)n}\times10^{-6}$ for various $l,~ K$, and $c$.

The synthetic data %to be 
used was generated as
\begin{align}
    y_i=\sum_{j=-p}^{l-1}\Tilde{\alpha}_{j+p+1}B^{(p)}_j(x_i)+\epsilon_i, \quad i=1,\ldots,200,
\end{align}
where $x_i,~ \epsilon_i$, and $\Tilde{\alpha}_j$ were independently drawn from $\mathrm{U}(0,1)$, $\mathrm{N}(0,0.1^2)$, and $\mathrm{N}(0,1)$, respectively.

Figures \ref{fig:vs-monotone-l}, \ref{fig:vs-monotone-K}, and \ref{fig:vs-monotone-c} show box plots of $\log_2(\tau_\mathrm{mono}/\tau_\mathrm{non})$ over 100 repetitions, where $\tau_\mathrm{mono}$ and $\tau_\mathrm{non}$ denote the computation times of the GIST with the monotone line search and the nonmonotone one, respectively.
For any parameter setting, we see from Figures \ref{fig:vs-monotone-l}, \ref{fig:vs-monotone-K}, and \ref{fig:vs-monotone-c} that the GIST with nonmonotone line search converged faster than the monotone one in 75 percent of the instances.
In addition, the computation time of the nonmonotone one was less than half that of the monotone one in 50 percent of the instances for almost all parameter settings.
Based on the result, we recommend the use of the nonmonotone line search and applied it in the following experiment.

\begin{figure}[p]
    \centering
    \includegraphics[width=\columnwidth]{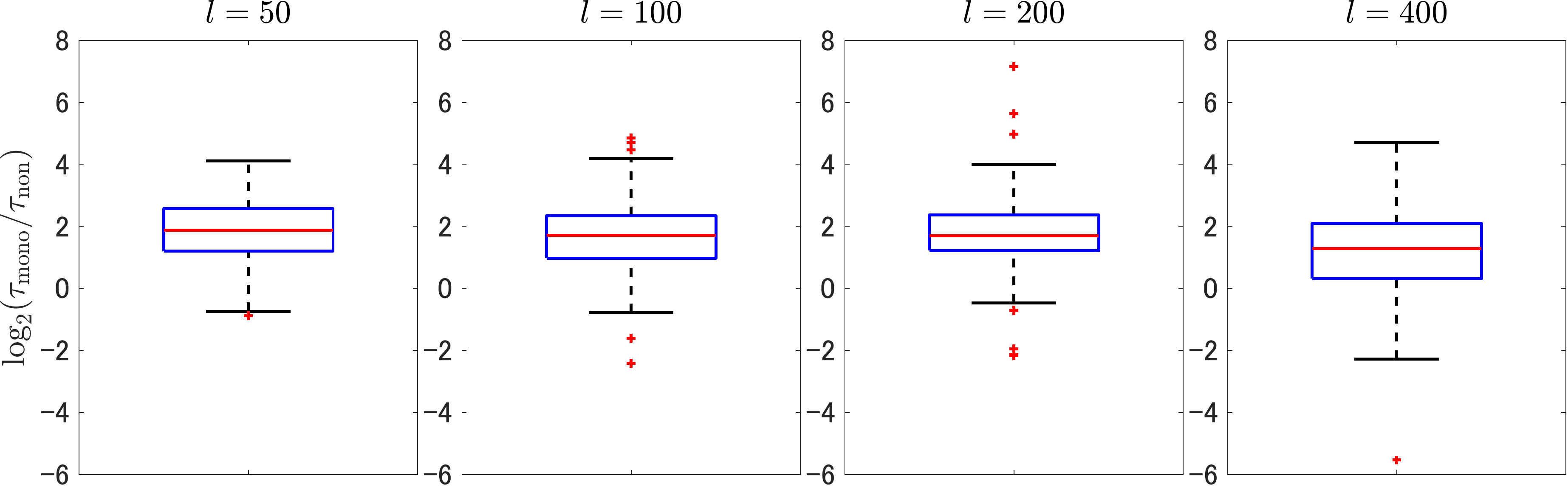}
    \caption{Box plots of $\log_2(\tau_\mathrm{mono}/\tau_\mathrm{non})$ for several $l$ with $K=10$ and $c=0$}
    \label{fig:vs-monotone-l}
    \vspace{20pt}
    \includegraphics[width=\columnwidth]{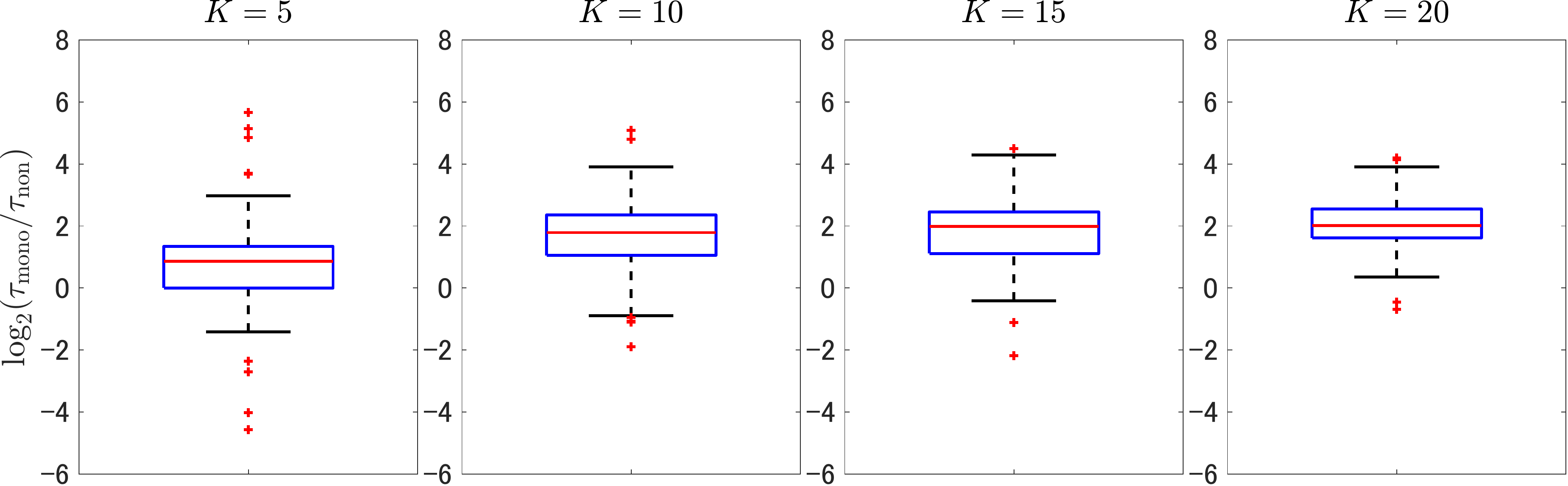}
    \caption{Box plots of $\log_2(\tau_\mathrm{mono}/\tau_\mathrm{non})$ for several $K$ with $l=100$ and $c=0$}
    \label{fig:vs-monotone-K}
    \vspace{20pt}
    \includegraphics[width=\columnwidth]{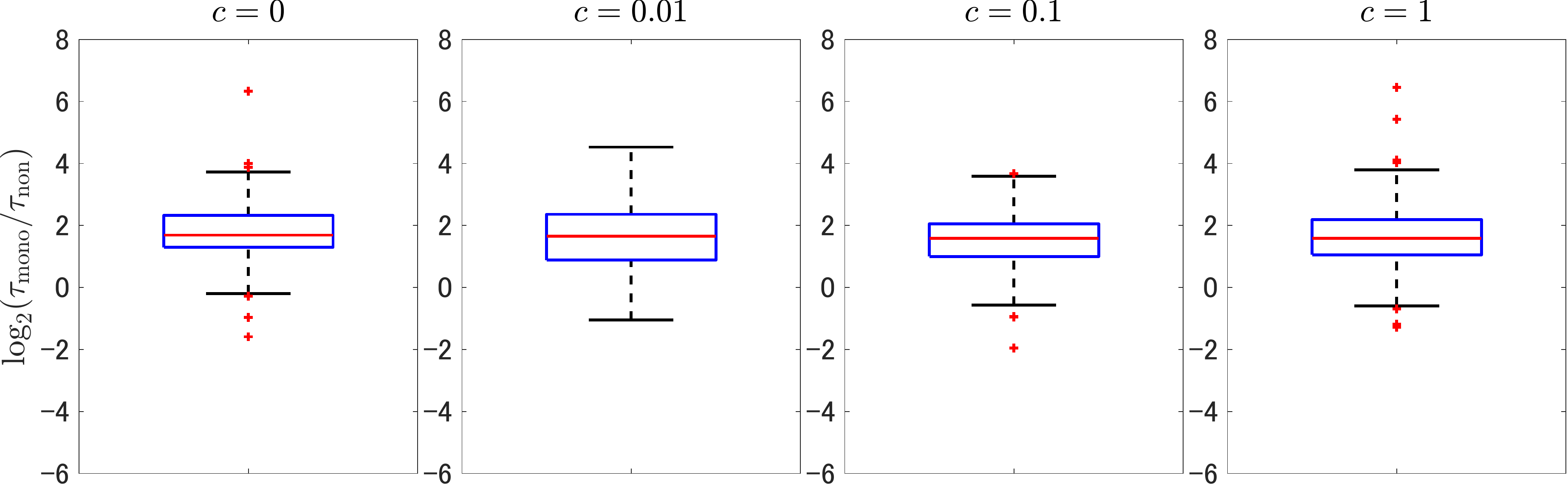}
    \caption{Box plots of $\log_2(\tau_\mathrm{mono}/\tau_\mathrm{non})$ for several $c$ with $K=10$ and $l=100$}
    \label{fig:vs-monotone-c}
\end{figure}

\subsection{Comparison with A-spline}\label{subsec:vs-a-spline}
In this subsection, our proposed methodology is compared with the A-spline \citep{goepp2018spline}.
The A-spline is a heuristics that tries to minimize the discontinuous objective function:
\begin{align}
    \underset{\bm{\alpha}}{\mbox{minimize}} \quad & \frac{1}{2}\|\bm{y}-\bm{B}^{(p)}\bm{\alpha}\|_2^2+\frac{\lambda}{2}\|\bm{D}^{(p+1)}\bm{\alpha}\|_0
\end{align}
by the adaptive ridge \citep{rippe2012visualization,frommlet2016adaptive}, which repeats
\begin{align}\label{eq:adaptive-procedure}
    \bm{\alpha}^+\leftarrow\left({\bm{B}^{(p)}}^\top\bm{B}^{(p)}+\lambda{\bm{D}^{(p+1)}}^\top\bm{W(\alpha)}\bm{D}^{(p+1)}\right)^{-1}{\bm{B}^{(p)}}^\top\bm{y},
\end{align}
where $\lambda>0$ is a trade-off parameter to strike the balance between the SSR and the cardinality term and $\bm{W(\alpha)}$ is a weight matrix determined by $\bm{\alpha}$.
After iterations, the re-estimation based on \eqref{obj:ssr} using only the knots selected in the obtained solution is performed.

In the experiment, $\lambda$ was chosen to minimize the Bayesian information criterion (BIC) among the hundred values,  $1^2\times10^{-4},2^2\times10^{-4},\ldots,100^2\times10^{-4}$, since the choice of $\lambda$ with the BIC has been reported to be suitable for the A-spline \citep{goepp2018spline}. 
As for the settings of the GIST, the nonmonotone line search ($M=10$) and $c=0$ were used.
The GIST was terminated either when $\|\bm{\beta}_t-\bm{\beta}_{t-1}\|_2\le\sqrt{K(l-1)n}\times10^{-6}$, or when the number of iterations reached $10^5$.
After termination, we executed the same re-estimation as the A-spline. 
The parameter $K$ was determined based on the BIC minimization among the twenty values, $1,2,\ldots,20$. 

\begin{figure}[t]
    \centering
    \includegraphics[width=\columnwidth]{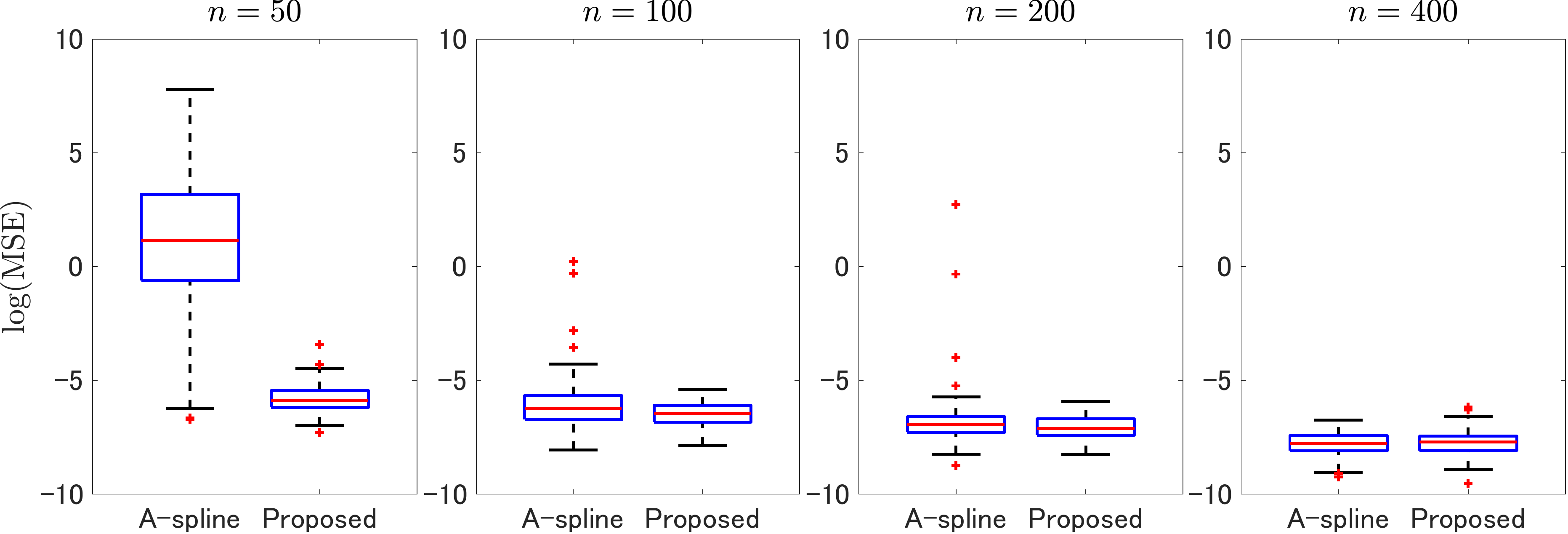}
    \caption{Box plots of the logarithm of the MSE for several $n$ with $l=100$}
    \label{fig:vs-a-spline-1}
    \vspace{20pt}
    \includegraphics[width=\columnwidth]{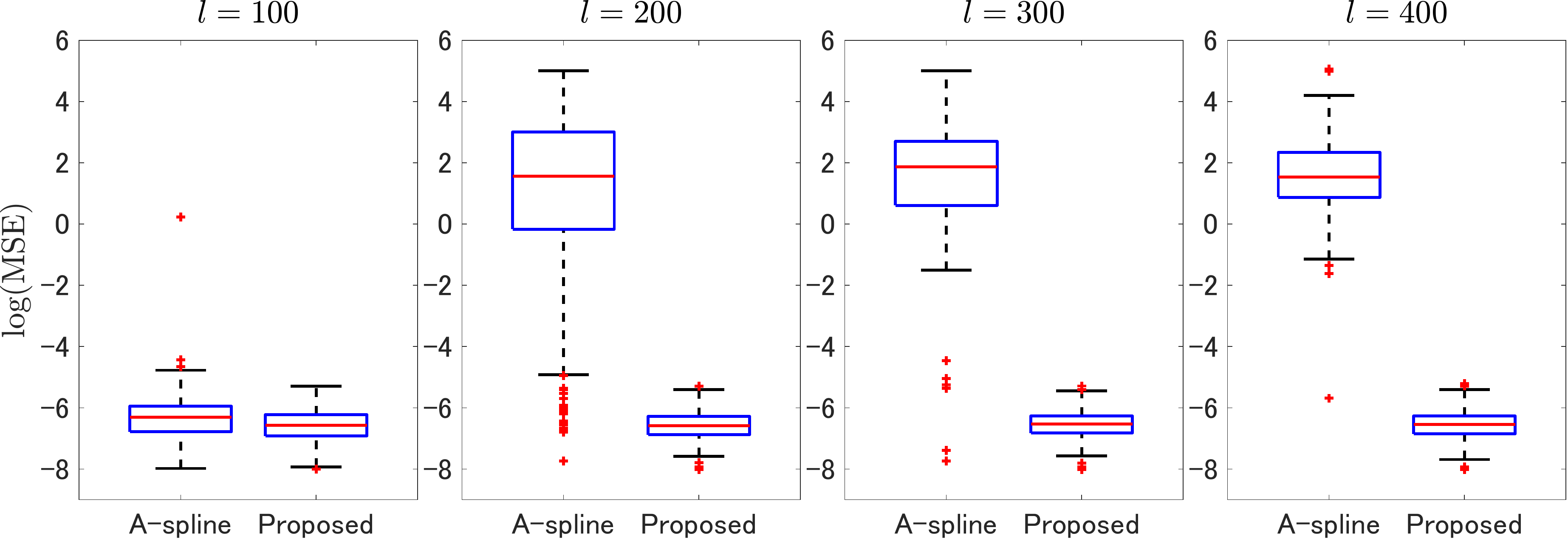}
    \caption{Box plots of the logarithm of the MSE for several $l$ with $n=100$}
    \label{fig:vs-a-spline-2}
\end{figure}

We first consider synthetic data sets under several $n$ and $l$.
As can be seen from \eqref{eq:adaptive-procedure}, the A-spline can be numerically unstable depending on the data (especially, when $n<l+p$), so when this happens, the data set was discarded and a new one was generated.
Each data set was generated randomly as
\begin{align}
    y_i=\sum_{j=-p}^{5}\Tilde{\alpha}_{j+p+1}\Tilde{B}^{(p)}_j(x_i)+\epsilon_i, \quad i=1,\ldots,n,
\end{align}
where $\Tilde{B}^{(p)}_j$ is the B-spline basis of order $p$ under $t_{-p}<\cdots<t_0<t_{l_1}<\cdots<t_{l_5}<t_l<\cdots<t_{l+p}$, $x_i\sim\mathrm{U}(0,1),~\alpha_j\sim\mathrm{U}(0,1),~  \epsilon_i\sim\mathrm{N}(0,0.1^2)$, and they are mutually independent.

As for the setting that changes $n$, the true used knots $t_{l_1}<\ldots<t_{l_5}$ was uniformly randomly drawn from $\{t_1,\ldots,t_{l-1}\}$.
Figure \ref{fig:vs-a-spline-1} shows box plots of the natural logarithm of the mean squared error (MSE) of the model obtained by each method for several $n$.
Here, the mean squared error
\begin{align}
    \int_0^1 \big(\Tilde{s}(x)-\hat{s}(x)\big)^2 dx
\end{align}
was computed by the Monte Carlo method with sample size $10^5$, where $\Tilde{s}$ and $\hat{s}$ are the true model and estimated model, respectively.
The A-spline failed to estimate when the sample size $n$ is small.
On the other hand, the proposed methodology was able to estimate stably even when the sample size is small.

As for the setting that changes $l$, the true used knots $t_{l_1}<\ldots<t_{l_5}$ were determined by sorting those independently drawn from $\mathrm{U}(0,1)$.
Figure \ref{fig:vs-a-spline-2} shows box plots of the natural logarithm of the MSE of the model obtained by each method for several $l$.
The frequency of estimation failures for the A-spline increased as the number of candidates $l$ increases.
Our methodology succeeded in stable estimation without the influence of the number of candidates.
This implies that the proposed methodology enables us to select position of knots almost continuously.

Next, we consider model selection based on the BIC for the fossil data ($n=106$).
Figure \ref{fig:vs-a-spline-3} shows that the proposed method estimated a simpler model than the A-spline. 
The selected locations of knots are indicated in the figure by black diamonds $\blacklozenge$, and we see that our method appears to have chosen fewer knots than the A-spline, which seems to overfit the data over some intervals where the knots are densely chosen. 
Figure \ref{fig:vs-a-spline-4} shows the case of four times as many knots candidates as in Figure \ref{fig:vs-a-spline-3}. 
With more knots candidates, the A-spline showed violent fluctuations, resulting in poor estimates, but our methodology estimated a similar simple model as with fewer knots candidates.

\begin{figure}[t]
    \centering
    \includegraphics[width=0.8\columnwidth]{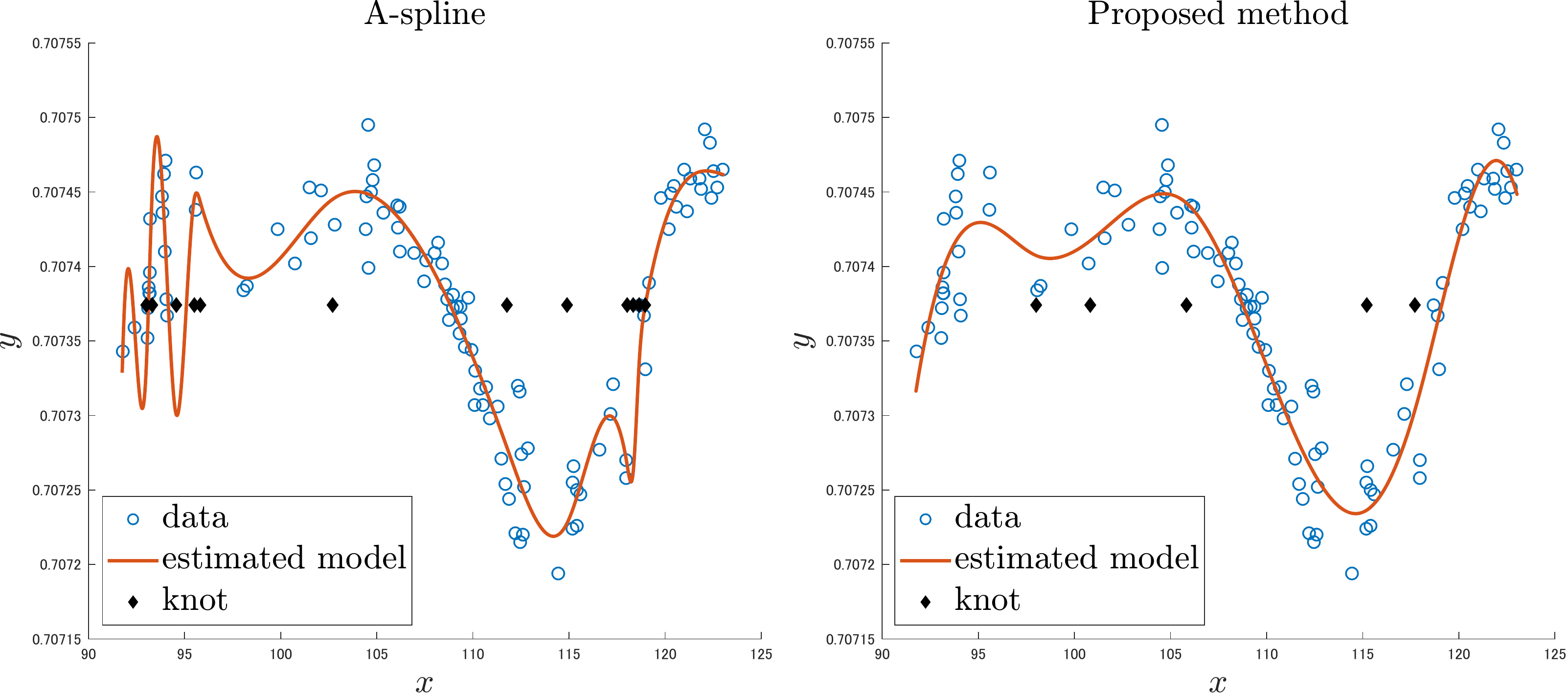}
    \caption{Fossil data and estimated model with $l=100$}
    \label{fig:vs-a-spline-3}
    \vspace{20pt}
    \includegraphics[width=0.8\columnwidth]{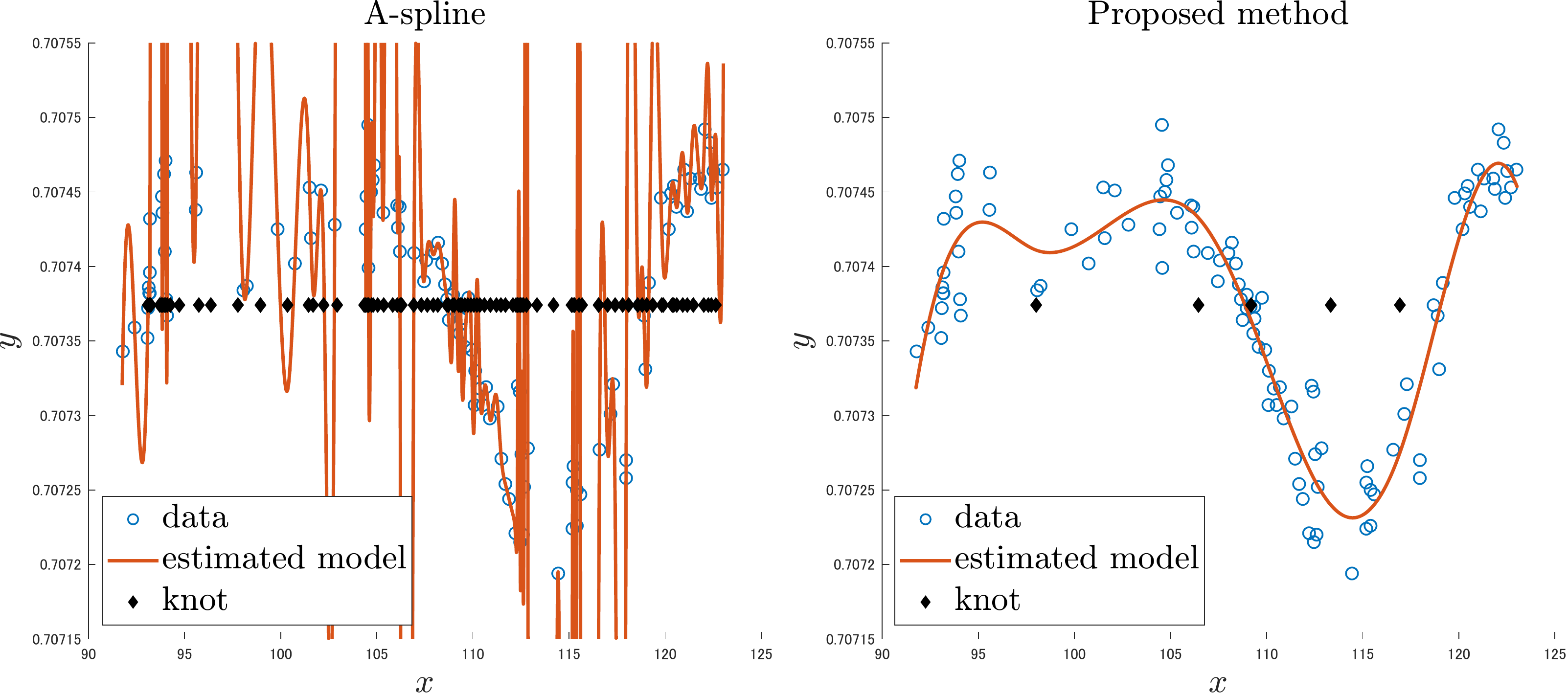}
    \caption{Fossil data and estimated model with $l=400$}
    \label{fig:vs-a-spline-4}
\end{figure}

\section{Concluding Remarks}\label{sec:conclusion}
A new continuous optimization methodology for automatic selection of the locations of the knots of the B-spline is presented in this paper. 
We first show the corrected proposition for selecting knots using the difference matrix of arbitrary order (Theorem \ref{thm:active_knot_indicator}).
Based on it, we consider the formulation \eqref{obj:trimmed_penalized} as an example of the generalized trimmed lasso \citep{yagishita2022exact}. 
The equivalence is established by showing an exact penalty which can be computed easily under a mild assumption (Theorem \ref{thm:e.p.p.}).
It is proposed to apply the change of variables and reducing variables to \eqref{obj:trimmed_penalized} and solve by the GIST \citep{wright2009sparse,gong2013general}.
Our methodology, namely, the combination of the formulation with the exact penalty and the algorithm enables the user to obtain a spline regression model using no greater than the designated number of knots since we show that the GIST converges to a local optimum, which satisfies the cardinality constraint (Theorem \ref{thm:global-convergence}). 
Numerical experiments indicate the effectiveness of our proposed method. 
The proposed method may perform better if the choice of the roughness penalty parameter $c$ is well taken into account, which we leave for future work.

% \appendix
\begin{appendices}

\section{Proofs of propositions}
The proofs omitted in the main body of the paper are displayed in the following.

\subsection{Proof of Theorem \ref{thm:active_knot_indicator}}\label{subsec:proof_active_knot_indicator}
To prove Theorem \ref{thm:active_knot_indicator} we start with the following lemma.

\begin{lemma}\label{lem:max-cofficient}
For $p\ge0$, let $s=\sum_{j=-p}^{l-1}\alpha_{j+p+1}B^{(p)}_j,~ \bm{\alpha}=(\alpha_1,\dots,\alpha_{p+l})^\top$ and $i\in\{1,\ldots,l-1\}$.
The coefficient of the $p$-th order term of $s$ over the interval $(t_i,t_{i+1})$ is denoted by the $i$-th element of the vector $\bm{D}^{(p+1)}_+\bm{\alpha}$, and that over the interval $(t_{i-1},t_{i})$ is denoted by the $i$-th element of the vector $\bm{D}^{(p+1)}_-\bm{\alpha}$.
\end{lemma}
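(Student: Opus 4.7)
The plan is to argue by induction on the order $p$. For the base case $p=0$, the spline $s=\sum_{j=0}^{l-1}\alpha_{j+1}B^{(0)}_j$ is piecewise constant with value $\alpha_{i+1}$ on $(t_i,t_{i+1})$ and value $\alpha_i$ on $(t_{i-1},t_i)$. These are exactly the $i$-th entries of $\bm{D}^{(1)}_+\bm{\alpha}$ and $\bm{D}^{(1)}_-\bm{\alpha}$, respectively, by inspection of the two matrices.

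For the inductive step, I would apply the B-spline recurrence to write
\begin{align*}
s(x)=\sum_{j=-p}^{l-1}\alpha_{j+p+1}\!\left[\frac{x-t_j}{t_{j+p}-t_j}B^{(p-1)}_j(x)+\frac{t_{j+p+1}-x}{t_{j+p+1}-t_{j+1}}B^{(p-1)}_{j+1}(x)\right]
\end{align*}
and extract the coefficient of $x^p$ on $(t_i,t_{i+1})$. Letting $a_j^{(p-1)}(i)$ denote the coefficient of $x^{p-1}$ of $B^{(p-1)}_j$ on that interval, the multiplication by $x/(\cdot)$ simply divides the leading coefficient by the denominator, so the $x^p$-coefficient of $s$ on $(t_i,t_{i+1})$ equals
\begin{align*}
\sum_{j=-p}^{l-1}\frac{\alpha_{j+p+1}}{t_{j+p}-t_j}a_j^{(p-1)}(i)-\sum_{j=-p}^{l-1}\frac{\alpha_{j+p+1}}{t_{j+p+1}-t_{j+1}}a_{j+1}^{(p-1)}(i).
\end{align*}

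Next I would shift the index of the second sum by $k=j+1$ and discard the boundary terms $a_{-p}^{(p-1)}(i)$ and $a_l^{(p-1)}(i)$, which vanish because the supports $[t_{-p},t_0)$ and $[t_l,t_{l+p})$ of $B^{(p-1)}_{-p}$ and $B^{(p-1)}_l$ do not meet $(t_i,t_{i+1})$ for $i\in\{0,\ldots,l-1\}$. After combining the two sums, the coefficient becomes
\begin{align*}
\sum_{j=-p+1}^{l-1}\frac{\alpha_{j+p+1}-\alpha_{j+p}}{t_{j+p}-t_j}\,a_j^{(p-1)}(i),
\end{align*}
and I would recognize the scaled differences as the entries of $\bm{\Delta}^{(p+1)}\bm{\alpha}$ (up to re-indexing $k=j+p$), so the inner sum has exactly the form covered by the inductive hypothesis applied to the spline of order $p-1$ with coefficient vector $\bm{\alpha}':=\bm{\Delta}^{(p+1)}\bm{\alpha}$. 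Hence it equals $(\bm{D}^{(p)}_+\bm{\alpha}')_i=(\bm{D}^{(p)}_+\bm{\Delta}^{(p+1)}\bm{\alpha})_i=(\bm{D}^{(p+1)}_+\bm{\alpha})_i$ by the definition of $\bm{D}^{(p+1)}_+$. The case of $(t_{i-1},t_i)$ proceeds identically but with the roles of the ``left'' and ``right'' recurrence coefficients and of $\bm{D}^{(p)}_-$ interchanged.

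The main hurdle is bookkeeping: aligning the index ranges of $\bm{\alpha}$, $\bm{\Delta}^{(p+1)}\bm{\alpha}$, and $\bm{D}^{(p)}_+$ so that after the shift $k=j+1$ the two sums combine into a clean scaled-difference, and justifying that the omitted end terms contribute zero because of the support of the lower-order basis functions. Once this indexing is handled cleanly, the algebra collapses onto the recursive definition $\bm{D}^{(p+1)}_+=\bm{D}^{(p)}_+\bm{\Delta}^{(p+1)}$, and the inductive step closes.
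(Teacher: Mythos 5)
Your proof is correct and follows essentially the same route as the paper's: induction on $p$ with the base case read off from $\bm{D}^{(1)}_\pm$, then applying the B-spline recurrence, reindexing and dropping the boundary terms (which vanish since $B^{(p-1)}_{-p}$ and $B^{(p-1)}_l$ are zero on $[t_0,t_l)$), recognizing the scaled differences as $\bm{\Delta}^{(p+1)}\bm{\alpha}$, and closing via the identity $\bm{D}^{(p+1)}_\pm=\bm{D}^{(p)}_\pm\bm{\Delta}^{(p+1)}$. The only cosmetic difference is that you track leading coefficients $a^{(p-1)}_j(i)$ directly while the paper rewrites $s$ itself as $x$ times an order-$(p-1)$ spline plus a lower-order remainder; the substance is identical.
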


\begin{proof}
We will prove by induction.
For $p=0$, we see that
\begin{align}
s &=\sum_{j=0}^{l-1}\alpha_{j+1}\bm{1}_{[t_j,t_{j+1})}\\
&=\sum_{j=1}^{l}\alpha_{j}\bm{1}_{[t_{j-1},t_j)},
\end{align}
$(\bm{D}^{(1)}_+\bm{\alpha})_i=\alpha_{i+1}$, and $(\bm{D}^{(1)}_-\bm{\alpha})_i=\alpha_i$.
Thus the statement holds for $p=0$.

Next, we assume that the statement holds for $p-1$.
We obtain from the definition of $B^{(p)}_j$ that
\begin{align}
s(x) &=\sum_{j=-p}^{l-1}\alpha_{j+p+1}B^{(p)}_j(x)\\
&=\sum_{j=-p}^{l-1}\alpha_{j+p+1}\left\{\frac{x-t_{j}}{t_{j+p}-t_{j}}B^{(p-1)}_j(x)+\frac{t_{j+p+1}-x}{t_{j+p+1}-t_{j+1}}B^{(p-1)}_{j+1}(x)\right\}\\
&=\sum_{j=-(p-1)}^{l-1}\left\{\alpha_{j+p+1}\frac{x-t_{j}}{t_{j+p}-t_{j}}+\alpha_{j+p}\frac{t_{j+p}-x}{t_{j+p}-t_j}\right\}B^{(p-1)}_j(x)\\
&=\sum_{j=-(p-1)}^{l-1}\left\{\frac{\alpha_{j+p+1}-\alpha_{j+p}}{t_{j+p}-t_j}x-\frac{\alpha_{j+p+1}t_j-\alpha_{j+p}t_{j+p}}{t_{j+p}-t_j}\right\}B^{(p-1)}_j(x)\\
&=\sum_{j=-(p-1)}^{l-1}\left\{(\bm{\Delta}^{(p+1)}\bm{\alpha})_{j+p}x-\frac{\alpha_{j+p+1}t_j-\alpha_{j+p}t_{j+p}}{t_{j+p}-t_j}\right\}B^{(p-1)}_j(x)\\
&=x\sum_{j=-(p-1)}^{l-1}(\bm{\Delta}^{(p+1)}\bm{\alpha})_{j+p}B^{(p-1)}_j(x)-\sum_{j=-(p-1)}^{l-1}\left\{\frac{\alpha_{j+p+1}t_j-\alpha_{j+p}t_{j+p}}{t_{j+p}-t_j}\right\}B^{(p-1)}_j(x)
\end{align}
for all $x\in[t_0,t_l)$, where the third equality follows from $B^{(q-1)}_{-q}=B^{(q-1)}_l=0$ on $[t_0,t_l)$.
Using the induction hypothesis, we see that the coefficient of the $(p-1)$-th order term of $s$ on the interval $(t_i,t_{i+1})$ is $(\bm{D}^{(p)}_+\bm{\Delta}^{(p+1)}\bm{\alpha})_i=(\bm{D}^{(p+1)}_+\bm{\alpha})_i$, and that on the interval $(t_{i-1},t_{i})$ is $(\bm{D}^{(p)}_-\bm{\Delta}^{(p+1)}\bm{\alpha})_i=(\bm{D}^{(p+1)}_-\bm{\alpha})_i$.
This completes the proof.
\end{proof}

\begin{proof}[proof of Theorem \ref{thm:active_knot_indicator}]
Note that $B^{(p)}_{-p},\ldots,B^{(p)}_{l-1}$ restricted to $[t_0,t_l)$ is a basis of the linear space consisting of piece-wise polynomials of order $p$ on $[t_0,t_l)$ with breakpoints $t_1,\ldots,t_{l-1}$ whose derivatives coincide up to order $p-1$ at all the breakpoints \citep[pp.97--98]{de1978practical}.
Thus the function $s$ does not use the $i$-th knot if and only if the coefficient of the $p$-th order term of $s$ over the interval $(t_i,t_{i+1})$ coincides with that over the interval $(t_{i-1},t_{i})$.
Since it holds that $\bm{D}^{(p+1)}=\bm{D}^{(p+1)}_+-\bm{D}^{(p+1)}_-$, $(\bm{D}^{(p+1)}\bm{\alpha})_{i}=0$ is equivalent to $(\bm{D}^{(p+1)}_+\bm{\alpha})_i=(\bm{D}^{(p+1)}_-\bm{\alpha})_i$.
Therefore, we have the desired result from Lemma \ref{lem:max-cofficient}.
\end{proof}

\subsection{Proof of Proposition \ref{prop:relation}}\label{subsec:proof_relation}
\begin{proof}
Let $\bm{\beta}^*$ be a local minimum of \eqref{obj:trimmed_penalized_TF2}, that is, there exists a neighborhood $\mathcal{N}$ of $\bm{\beta}^*$ such that $F(\bm{\beta}^*)\le F(\bm{\beta})$ holds for any $\bm{\beta}\in\mathcal{N}$.
Noting that $F(\bm{\beta})=G(\bm{\beta},g(\bm{\beta}))$ holds for any $\bm{\beta}\in\mathbb{R}^{l-1}$, we have
\begin{align}
    G(\bm{\beta}^*,g(\bm{\beta}^*))=F(\bm{\beta}^*)\le F(\bm{\beta})\le G(\bm{\beta},\bm{\beta}')
\end{align}
for any $\bm{\beta}\in\mathcal{N}$ and $\bm{\beta}'\in\mathbb{R}^{p+1}$, which implies that $(\bm{\beta}^*,g(\bm{\beta}^*))$ is locally optimal to \eqref{obj:trimmed_penalized_TF}.
It is clear that the local optimality of $(\bm{\beta}^*,g(\bm{\beta}^*))$ to \eqref{obj:trimmed_penalized_TF} and $\bm{\Sigma}^{(p+1)}(\bm{\beta}^{*\top},g(\bm{\beta}^*)^\top)^\top$ to \eqref{obj:trimmed_penalized} are equivalent.
This completes the proof of the former argument.
The latter claim can be proved as well.
\end{proof}

\subsection{Proof of Theorem \ref{thm:e.p.p.}}\label{subsec:proof_e.p.p.}
\begin{proof}
Let $h(\bm{\beta})\coloneqq\frac{1}{2}\big\|\bm{z}_1-\bm{L}_1\bm{\beta}\big\|_2^2+\frac{c}{2}\big\|\bm{z}_2-\bm{L}_2\bm{\beta}\big\|_2^2$.
We see from the d-stationarity of $\bm{\beta}^*$ that
\begin{align}\label{eq:d-stat-to-0}
\begin{split}
    &\nabla h(\bm{\beta}^*)^\top(0-\bm{\beta}^*)+\gamma T_K'(\bm{\beta}^*;-\bm{\beta}^*)\\
    &=h'(\bm{\beta}^*;-\bm{\beta}^*)+\gamma T_K'(\bm{\beta}^*;-\bm{\beta}^*)\\
    &=F'(\bm{\beta}^*;-\bm{\beta}^*)\\
    &\ge0.
\end{split}
\end{align}
Since it is easy to see that $T_K'(\bm{\beta}^*;-\bm{\beta}^*)=-T_K(\bm{\beta}^*)$, we have
\begin{align}
    \|\bm{z}_1\|_2^2+c\|\bm{z}_2\|_2^2\ge\big\|\bm{z}_1-\bm{L}_1\bm{\beta}^*\big\|_2^2+c\big\|\bm{z}_2-\bm{L}_2\bm{\beta}^*\big\|_2^2
\end{align}
from \eqref{eq:d-stat-to-0}, the convexity of $h$, and the non-negativity of $T_K$.
This leads to
\begin{align}
    \big\|\bm{z}_1-\bm{L}_1\bm{\beta}^*\big\|_2\le\sqrt{\|\bm{z}_1\|_2^2+c\|\bm{z}_2\|_2^2}, \quad \sqrt{c}\big\|\bm{z}_2-\bm{L}_2\bm{\beta}^*\big\|_2\le\sqrt{\|\bm{z}_1\|_2^2+c\|\bm{z}_2\|_2^2}
\end{align}
and hence we can evaluate as
\begin{align}
    \nabla h(\bm{\beta}^*)^\top\bm{d} &\le\|\nabla h(\bm{\beta}^*)\|_\infty\\
    &=\max_{j=1,\ldots,l-1}\left|{\bm{l}^{(1)}_j}^\top(\bm{z}_1-\bm{L}_1\bm{\beta}^*)+c{\bm{l}^{(2)}_j}^\top(\bm{z}_2-\bm{L}_2\bm{\beta}^*)\right|\\
    &\le\max_{j=1,\ldots,l-1}\left\{\|\bm{l}^{(1)}_j\|_2\|\bm{z}_1-\bm{L}_1\bm{\beta}^*\|_2+c\|\bm{l}^{(2)}_j\|_2\|\bm{z}_2-\bm{L}_2\bm{\beta}^*\|_2\right\}\\
    &\le\max_{j=1,\ldots,l-1}\left\{\|\bm{l}^{(1)}_j\|_2\sqrt{\|\bm{z}_1\|_2^2+c\|\bm{z}_2\|_2^2}+\sqrt{c}\|\bm{l}^{(2)}_j\|_2\sqrt{\|\bm{z}_1\|_2^2+c\|\bm{z}_2\|_2^2}\right\}\\
    &=\max_{j=1,\ldots,l-1}\left\{\|\bm{l}^{(1)}_j\|_2+\sqrt{c}\|\bm{l}^{(2)}_j\|_2\right\}\sqrt{\|\bm{z}_1\|_2^2+c\|\bm{z}_2\|_2^2}.
\end{align}
for all $d$ such that $\|d\|_1=1$ and $d\in\{-1,0,1\}^{l-1}$.
From Lemma 4 of \citet{yagishita2022exact}, we obtain $\|\bm{\beta}^*\|_0\le K$.
Noting that
\begin{align}
    \begin{pmatrix}
        \bm{D}^{(p+1)}\bm{\Sigma}^{(p+1)}(\bm{\beta}^{*\top},g(\bm{\beta}^*)^\top)^\top\\
        \bm{A}\bm{\Sigma}^{(p+1)}(\bm{\beta}^{*\top},g(\bm{\beta}^*)^\top)^\top
    \end{pmatrix}
    =
    \hat{\bm{D}}^{(p+1)}\bm{\Sigma}^{(p+1)}
    \begin{pmatrix}
        \bm{\beta}^*\\
        g(\bm{\beta}^*)
    \end{pmatrix}
    =
    \begin{pmatrix}
        \bm{\beta}^*\\
        g(\bm{\beta}^*)
    \end{pmatrix}
\end{align}
and that $\bm{\Sigma}^{(p+1)}(\bm{\beta}^{*\top},g(\bm{\beta}^*)^\top)^\top$ is local minimum of \eqref{obj:trimmed_penalized} according to Proposition \ref{prop:relation}, we have the desired result.
\end{proof}

\subsection{Proof of Theorem \ref{thm:global-convergence}}\label{subsec:proof_global-convergence}
\begin{proof}
Let $\Omega\coloneqq\{\bm{\beta}\mid F(\bm{\beta})\le F(\bm{\beta}_0)\}$.
Note that $\{\bm{\beta}_t\}\subset\Omega$.
From the non-negativity of $T_K$, it holds that $h(\bm{\beta})\le F(\bm{\beta}_0)$ for any $\bm{\beta}\in\Omega$, which leads to
\begin{align}
    \big\|\bm{z}_1-\bm{L}_1\bm{\beta}\big\|_2\le\sqrt{2F(\bm{\beta}_0)}, \quad \big\|\bm{z}_2-\bm{L}_2\bm{\beta}\big\|_2\le\sqrt{\frac{2F(\bm{\beta}_0)}{c}}.
\end{align}
Thus, we have
\begin{align}
    \|\nabla h(\bm{\beta})\|_2 &=\left\|\bm{L}_1^\top(\bm{z}_1-\bm{L}_1\bm{\beta})+c\bm{L}_2^\top(\bm{z}_2-\bm{L}_2\bm{\beta})\right\|_2\\
    &\le\|\bm{L}_1\|\left\|\bm{z}_1-\bm{L}_1\bm{\beta}\right\|_2+c\|\bm{L}_2\|\left\|\bm{z}_2-\bm{L}_2\bm{\beta}\right\|_2\\
    &\le\|\bm{L}_1\|\sqrt{2F(\bm{\beta}_0)}+\|\bm{L}_2\|\sqrt{2cF(\bm{\beta}_0)}
\end{align}
for any $\bm{\beta}\in\Omega$ where $\|\cdot\|$ is the operator norm, which implies that $h$ is Lipschitz continuous on $\Omega$.
Furthermore, $T_K$ is also Lipschitz continuous because it is expressed as the difference between the $\ell_1$ norm and the largest-$K$ norm.
As a result, $F$ is Lipschitz continuous on $\Omega$, namely, is also uniformly continuous on $\Omega$.
Combining the uniform continuity and non-negativity of $F$ with the Lipschitz continuity of $\nabla h$ yields $\|\bm{\beta}_{t+1}-\bm{\beta}_t\|_2\to0$, similarly to the proof of Lemma 4 of \citet{wright2009sparse}.
Let $\bm{\beta}^*$ be an accumulation point of $\{\bm{\beta}_t\}$ and $\{\bm{\beta}_{t_i}\}$ be a subsequence that converges to $\bm{\beta}^*$.
Since it is easy to see that $\{\eta_t\}$ is bounded (see, for example, \citet[Theorem 5.1]{lu2018sparse}), for any $\bm{d}\in\mathbb{R}^{l-1}$, it follows from the optimality of $\bm{\beta}_{t_i+1}$ that
\begin{align}
    &\nabla h(\bm{\beta}_{t_i})^\top\bm{\beta}_{t_i+1}+\frac{\eta_{t_i}}{2}\|\bm{\beta}_{t_i+1}-\bm{\beta}_{t_i}\|_2^2+T_K(\bm{\beta}_{t_i+1})\\
    &\le\nabla h(\bm{\beta}_{t_i})^\top(\bm{\beta}^*+\xi\bm{d})+\frac{\eta_{t_i}}{2}\|\bm{\beta}^*+\xi\bm{d}-\bm{\beta}_{t_i}\|_2^2+T_K(\bm{\beta}^*+\xi\bm{d})\\
    &\le\nabla h(\bm{\beta}_{t_i})^\top(\bm{\beta}^*+\xi\bm{d})+\frac{\eta_{\max}}{2}\|\bm{\beta}^*+\xi\bm{d}-\bm{\beta}_{t_i}\|_2^2+T_K(\bm{\beta}^*+\xi\bm{d})
\end{align}
for $\xi>0$, where $\eta_{\max}\coloneqq\sup_t\eta_t$.
We obtain from the continuity of $\nabla h$ and $T_K$ that
\begin{align}
    \xi\nabla h(\bm{\beta}^*)^\top\bm{d}+\frac{\eta_{\max}\xi^2}{2}\|\bm{d}\|_2^2+T_K(\bm{\beta}^*+\xi\bm{d})-T_K(\bm{\beta}^*)\ge0.
\end{align}
Dividing both sides by $\xi$ and taking the limit $\xi\to0$ give
\begin{align}
    F'(\bm{\beta}^*;\bm{d})=\nabla h(\bm{\beta}^*)^\top d+\gamma T_K'(\bm{\beta}^*;\bm{d})\ge0,
\end{align}
which implies that $\bm{\beta}^*$ is a d-stationary point of \eqref{obj:trimmed_penalized_TF2}, that is, a local minimum of \eqref{obj:trimmed_penalized_TF2}.

Next, let $\bm{\alpha}^*$ be an accumulation point of $\big\{\bm{\Sigma}^{(p+1)}(\bm{\beta}_t^\top,g(\bm{\beta}_t)^\top)^\top\big\}$ and $\big\{\bm{\Sigma}^{(p+1)}(\bm{\beta}_{t_i}^\top,g(\bm{\beta}_{t_i})^\top)^\top\big\}$ be a subsequence that converges to $\bm{\alpha}^*$.
We see that
\begin{align}\label{eq:relation}
\begin{split}
    \begin{pmatrix}
        \bm{\beta}_{t_i}\\
        g(\bm{\beta}_{t_i})
    \end{pmatrix}
    =\hat{\bm{D}}^{(p+1)}\bm{\Sigma}^{(p+1)}
    \begin{pmatrix}
        \bm{\beta}_{t_i}\\
        g(\bm{\beta}_{t_i})
    \end{pmatrix}
    \to\hat{\bm{D}}^{(p+1)}\bm{\alpha}^*=
    \begin{pmatrix}
        \bm{D}^{(p+1)}\bm{\alpha}^*\\
        \bm{A}\bm{\alpha}^*
    \end{pmatrix},
\end{split}
\end{align}
which implies that $\bm{D}^{(p+1)}\bm{\alpha}^*$ is a local minimum of \eqref{obj:trimmed_penalized_TF2} because it is an accumulation point of $\{\bm{\beta}_t\}$.
It follows from \eqref{eq:relation} and the continuity of $g$ that $g(\bm{D}^{(p+1)}\bm{\alpha}^*)=\bm{A}\bm{\alpha}^*$ and hence we obtain from Proposition \ref{prop:relation} that
\begin{align}
    \bm{\alpha}^*=\bm{\Sigma}^{(p+1)}\hat{\bm{D}}^{(p+1)}\bm{\alpha}^*=\bm{\Sigma}^{(p+1)}
    \begin{pmatrix}
        \bm{D}^{(p+1)}\bm{\alpha}^*\\
        g(\bm{D}^{(p+1)}\bm{\alpha}^*)
    \end{pmatrix}
\end{align}
is locally optimal to \eqref{obj:trimmed_penalized}.
\end{proof}

\section{Construction of $\hat{\bm{D}}^{(p+1)}$ and $\bm{\Sigma}^{(p+1)}$}\label{sec:diff-mat}
In this section, concrete constructions of $\hat{\bm{D}}^{(p+1)}$ and $\bm{\Sigma}^{(p+1)}$ are shown.
Let us define
\begin{align}
    \hat{\bm{D}}^{(1)}\coloneqq
    \begin{pmatrix}
        -1 & 1 & & & & & & \\
         & \ddots & \ddots & & & & & \\
         & & -1 & 1 & & & \\
         & & & s_1 & & & \\
         & & & & & \ddots & \\
         & & & & & & s_1 \\
    \end{pmatrix}
    \in\mathbb{R}^{(l+p)\times(l+p)}
\end{align}
by expanding $\bm{D}^{(1)}$ with $s_1\neq0$ and
\begin{align}
    \hat{\bm{\Delta}}^{(q+1)}\coloneqq
    \begin{pmatrix}
        \frac{-1}{t_1-t_{-q+1}} & \frac{1}{t_1-t_{-q+1}} &  & & \\
         & \frac{-1}{t_2-t_{-q+2}} & \frac{1}{t_2-t_{-q+2}} & & \\
         & & \ddots & \ddots & \\
         & & & \frac{-1}{t_{l-1+q}-t_{l-1}} & \frac{1}{t_{l-1+q}-t_{l-1}} \\
         & & & & s_{q+1} & & \\
         & & & & & \ddots & \\
         & & & & & & s_{q+1} \\
    \end{pmatrix}
    \in\mathbb{R}^{(l+p)\times(l+p)}
\end{align}
by expanding $\bm{\Delta}^{(q+1)}$ with $s_{q+1}\neq0$ for $1\le q\le p$.
Let
\begin{align}
\hat{\bm{D}}^{(q+1)}\coloneqq\hat{\bm{D}}^{(q)}\hat{\bm{\Delta}}^{(q+1)}
\end{align}
recursively, then there exists a $(p+1)\times(l+p)$ matrix $\bm{A}$ such that
\begin{align}
    \hat{\bm{D}}^{(p+1)}=
    \begin{pmatrix}
        \bm{D}^{(p+1)}\\
        \bm{A}
    \end{pmatrix}.
\end{align}
By constructions of $\hat{\bm{D}}^{(1)}$ and $\hat{\bm{\Delta}}^{(q+1)}$, they are non-singular, and hence $\hat{\bm{D}}^{(p+1)}$ is also non-singular.
It is not hard to see that
\begin{align}\label{eq:inverse-diff-1}
    \big(\hat{\bm{D}}^{(1)}\big)^{-1}=
    \begin{pmatrix}
        -\bm{U} & \bm{S} \\
         & s_{1}^{-1}\bm{I}_{p+1}
    \end{pmatrix}
\end{align}
and
\begin{align}\label{eq:inverse-diff-2}
    \big(\hat{\bm{\Delta}}^{(q+1)}\big)^{-1}=
    \begin{pmatrix}
        -(t_1-t_{-q+1}) & -(t_2-t_{-q+2}) & \cdots & -(t_{l-1+q}-t_{l-1}) & s_{q+1}^{-1} \\
         & -(t_2-t_{-q+2}) & \cdots & -(t_{l-1+q}-t_{l-1}) & s_{q+1}^{-1} \\
         & & \ddots & \vdots & \vdots \\
         & & & -(t_{l-1+q}-t_{l-1}) & \vdots \\
         & & & & s_{q+1}^{-1} & & \\
         & & & & & \ddots & \\
         & & & & & & s_{q+1}^{-1} \\
    \end{pmatrix}
\end{align}
for $1\le q\le p$, where $\bm{U}$ is the upper triangular matrix of size $(l-1)\times(l-1)$ such that all non-zero elements equal $1$ and
\begin{align}
    \bm{S}=
    \begin{pmatrix}
        s_{1}^{-1} & 0 & \ldots & 0 \\
        \vdots & \vdots & & \vdots \\
        s_{1}^{-1} & 0 & \ldots & 0 \\
    \end{pmatrix}
    \in\mathbb{R}^{l-1)\times(p+1)}.
\end{align}
As a result, we can compute as
\begin{align}
    \big(\bm{\Sigma}^{(p+1)}\big)^{-1}=\big(\hat{\bm{D}}^{(1)}\big)^{-1}\cdots \big(\hat{\bm{\Delta}}^{(p+1)}\big)^{-1}
\end{align}
by using \eqref{eq:inverse-diff-1} and \eqref{eq:inverse-diff-2}.

\end{appendices}

\bibliographystyle{plainnat}
\bibliography{reference.bib}

\begin{thebibliography}{22}
\providecommand{\natexlab}[1]{#1}
\providecommand{\url}[1]{\texttt{#1}}
\expandafter\ifx\csname urlstyle\endcsname\relax
  \providecommand{\doi}[1]{doi: #1}\else
  \providecommand{\doi}{doi: \begingroup \urlstyle{rm}\Url}\fi

\bibitem[Amir et~al.(2021)Amir, Basri, and Nadler]{amir2021trimmed}
Tal Amir, Ronen Basri, and Boaz Nadler.
\newblock The trimmed lasso: Sparse recovery guarantees and practical
  optimization by the generalized soft-min penalty.
\newblock \emph{SIAM Journal on Mathematics of Data Science}, 3\penalty0
  (3):\penalty0 900--929, 2021.

\bibitem[Barzilai and Borwein(1988)]{barzilai1988two}
Jonathan Barzilai and Jonathan~M Borwein.
\newblock Two-point step size gradient methods.
\newblock \emph{IMA Journal of Numerical Analysis}, 8\penalty0 (1):\penalty0
  141--148, 1988.

\bibitem[Bertsimas et~al.(2017)Bertsimas, Copenhaver, and
  Mazumder]{bertsimas2017trimmed}
Dimitris Bertsimas, Martin~S Copenhaver, and Rahul Mazumder.
\newblock The trimmed lasso: Sparsity and robustness.
\newblock \emph{arXiv preprint arXiv:1708.04527}, 2017.

\bibitem[Boyd et~al.(2011)Boyd, Parikh, Chu, Peleato, and
  Eckstein]{boyd2011distributed}
Stephen Boyd, Neal Parikh, Eric Chu, Borja Peleato, and Jonathan Eckstein.
\newblock Distributed optimization and statistical learning via the alternating
  direction method of multipliers.
\newblock \emph{Foundations and Trends{\textregistered} in Machine learning},
  3\penalty0 (1):\penalty0 1--122, 2011.

\bibitem[De~Boor(1978)]{de1978practical}
Carl De~Boor.
\newblock \emph{A practical guide to splines}, volume~27.
\newblock springer-verlag New York, 1978.

\bibitem[Eilers and Marx(1996)]{eilers1996flexible}
Paul~HC Eilers and Brian~D Marx.
\newblock Flexible smoothing with b-splines and penalties.
\newblock \emph{Statistical Science}, 11\penalty0 (2):\penalty0 89--121, 1996.

\bibitem[Frommlet and Nuel(2016)]{frommlet2016adaptive}
Florian Frommlet and Gr{\'e}gory Nuel.
\newblock An adaptive ridge procedure for l0 regularization.
\newblock \emph{PloS One}, 11\penalty0 (2):\penalty0 e0148620, 2016.

\bibitem[Goepp et~al.(2018)Goepp, Bouaziz, and Nuel]{goepp2018spline}
Vivien Goepp, Olivier Bouaziz, and Gr{\'e}gory Nuel.
\newblock Spline regression with automatic knot selection.
\newblock \emph{arXiv preprint arXiv:1808.01770}, 2018.

\bibitem[Gong et~al.(2013)Gong, Zhang, Lu, Huang, and Ye]{gong2013general}
Pinghua Gong, Changshui Zhang, Zhaosong Lu, Jianhua Huang, and Jieping Ye.
\newblock A general iterative shrinkage and thresholding algorithm for
  non-convex regularized optimization problems.
\newblock In \emph{International Conference on Machine Learning}, pages 37--45.
  PMLR, 2013.

\bibitem[Gotoh et~al.(2018)Gotoh, Takeda, and Tono]{gotoh2018dc}
{Jun-ya} Gotoh, Akiko Takeda, and Katsuya Tono.
\newblock Dc formulations and algorithms for sparse optimization problems.
\newblock \emph{Mathematical Programming}, 169\penalty0 (1):\penalty0 141--176,
  2018.

\bibitem[Grippo and Sciandrone(2002)]{grippo2002nonmonotone}
Luigi Grippo and Marco Sciandrone.
\newblock Nonmonotone globalization techniques for the barzilai-borwein
  gradient method.
\newblock \emph{Computational Optimization and Applications}, 23:\penalty0
  143--169, 2002.

\bibitem[Grippo et~al.(1986)Grippo, Lampariello, and
  Lucidi]{grippo1986nonmonotone}
Luigi Grippo, Francesco Lampariello, and Stephano Lucidi.
\newblock A nonmonotone line search technique for newton’s method.
\newblock \emph{SIAM journal on Numerical Analysis}, 23\penalty0 (4):\penalty0
  707--716, 1986.

\bibitem[Kim et~al.(2009)Kim, Koh, Boyd, and Gorinevsky]{kim2009ell_1}
Seung-Jean Kim, Kwangmoo Koh, Stephen Boyd, and Dimitry Gorinevsky.
\newblock $\ell_1$ trend filtering.
\newblock \emph{SIAM Review}, 51\penalty0 (2):\penalty0 339--360, 2009.

\bibitem[Lu and Li(2018)]{lu2018sparse}
Zhaosong Lu and Xiaorui Li.
\newblock Sparse recovery via partial regularization: Models, theory, and
  algorithms.
\newblock \emph{Mathematics of Operations Research}, 43\penalty0 (4):\penalty0
  1290--1316, 2018.

\bibitem[Nakayama and Gotoh(2021)]{nakayama2021superiority}
Shummin Nakayama and {Jun-ya} Gotoh.
\newblock On the superiority of pgms to pdcas in nonsmooth nonconvex sparse
  regression.
\newblock \emph{Optimization Letters}, 15\penalty0 (8):\penalty0 2831--2860,
  2021.

\bibitem[O'Sullivan(1986)]{o1986statistical}
Finbarr O'Sullivan.
\newblock A statistical perspective on ill-posed inverse problems.
\newblock \emph{Statistical Science}, pages 502--518, 1986.

\bibitem[Rippe et~al.(2012)Rippe, Meulman, and Eilers]{rippe2012visualization}
Ralph~CA Rippe, Jacqueline~J Meulman, and Paul~HC Eilers.
\newblock Visualization of genomic changes by segmented smoothing using an l0
  penalty.
\newblock \emph{PloS One}, 7\penalty0 (6):\penalty0 e38230, 2012.

\bibitem[Tibshirani(2014)]{tibshirani2014adaptive}
Ryan~J Tibshirani.
\newblock Adaptive piecewise polynomial estimation via trend filtering.
\newblock \emph{The Annals of Statistics}, 42\penalty0 (1):\penalty0 285--323,
  2014.

\bibitem[Tibshirani and Taylor(2011)]{tibshirani2011solution}
Ryan~J Tibshirani and Jonathan Taylor.
\newblock The solution path of the generalized lasso.
\newblock \emph{The Annals of Statistics}, 39\penalty0 (3):\penalty0
  1335--1371, 2011.

\bibitem[Wright et~al.(2009)Wright, Nowak, and Figueiredo]{wright2009sparse}
Stephen~J Wright, Robert~D Nowak, and M{\'a}rio~AT Figueiredo.
\newblock Sparse reconstruction by separable approximation.
\newblock \emph{IEEE Transactions on Signal Processing}, 57\penalty0
  (7):\penalty0 2479--2493, 2009.

\bibitem[Yagishita and Gotoh(2020)]{yagishita2020pursuit}
Shotaro Yagishita and {Jun-ya} Gotoh.
\newblock Pursuit of the cluster structure of network lasso: Recovery condition
  and non-convex extension.
\newblock \emph{arXiv preprint arXiv:2012.07491}, 2020.

\bibitem[Yagishita and Gotoh(2022)]{yagishita2022exact}
Shotaro Yagishita and {Jun-ya} Gotoh.
\newblock Exact penalization at d-stationary points of cardinality-or
  rank-constrained problem.
\newblock \emph{arXiv preprint arXiv:2209.02315}, 2022.

\end{thebibliography}

\end{document}